\theoremstyle{plain}
\newtheorem{thm}[subsection]{Theorem}
\newtheorem{prop}[subsection]{Proposition}
\newtheorem{cor}[subsection]{Corollary}
\newtheorem{que}[subsection]{Question}
\newtheorem{lem}[subsection]{Lemma}
\theoremstyle{definition}
\theoremstyle{remark}
\newtheorem{rem}[subsection]{Remark}
\newcommand{\A}{\ensuremath{\mathcal{A}}}
\newcommand{\D}{\ensuremath{\mathcal{D}}}
\newcommand{\E}{\ensuremath{\mathcal{E}}}
\newcommand{\m}{\ensuremath{\mathfrak{m}}}
\renewcommand{\O}{\ensuremath{\mathcal{O}}}
\newcommand{\bbQ}{\ensuremath{\mathbb{Q}}}
\newcommand{\G}{\ensuremath{\mathcal{G}}}
\newcommand{\bbG}{\ensuremath{\mathbb{G}}}
\newcommand{\J}{\ensuremath{\mathcal{J}}}
\newcommand{\W}{\ensuremath{\mathcal{W}}}
\newcommand{\F}{\ensuremath{\mathcal{F}}}
\newcommand{\bbZ}{\ensuremath{\mathbb{Z}}}
\newcommand{\bbR}{\ensuremath{\mathbb{R}}}
\newcommand{\bbN}{\ensuremath{\mathbb{N}}}
\renewcommand{\phi}{\ensuremath{\varphi}}
\renewcommand{\r}{\ensuremath{\rightarrow}}
\newcommand{\ab}{\ensuremath{\mathrm{ab}}}
\renewcommand{\dim}{\ensuremath{\mathrm{dim}}}
\renewcommand{\mod}{\ensuremath{\mathrm{mod}}}
\newcommand{\tor}{\ensuremath{\mathrm{tor}}}
\newcommand{\tors}{\ensuremath{\mathrm{tors}}}
\newcommand{\ord}{\ensuremath{\mathrm{ord}}}
\DeclareMathOperator{\Res}{\mathrm{Res}}
\DeclareMathOperator{\Gr}{\mathrm{Gr}}
\DeclareMathOperator{\Gal}{\mathrm{Gal}}
\DeclareMathOperator{\Jac}{\mathrm{Jac}}
\DeclareMathOperator{\Hom}{\mathrm{Hom}}
\DeclareMathOperator{\Norm}{\mathrm{Norm}}
\title{Splitting properties of the reduction of semi-abelian varieties}
\author{Alan Hertgen}
\address{Universit\'e de Bordeaux, Institut de Math\'ematiques de Bordeaux, UMR 5251, 351 cours de la lib\'eration, 33405 Talence cedex, France}
\email{alan.hertgen@math.u-bordeaux.fr}
\subjclass[2010]{11G07, 11G10, 14G20, 14G40}
\begin{document}

\begin{abstract}
Let $K$ be a complete discrete valuation field. Let $\O_K$ be its ring of integers. Let $k$ be its residue field which we assume to be algebraically closed of characteristic exponent $p\geq1$. Let $G/K$ be a semi-abelian variety. Let $\G/\O_K$ be its N\'eron model. The special fiber $\G_k/k$ is an extension of the identity component $\G_k^0/k$ by the group of components $\Phi(G)$. We say that $G/K$ has \emph{split reduction} if this extension is split.

Whereas $G/K$ has always split reduction if $p=1$ we prove that it is no longer the case if $p>1$ even if $G/K$ is tamely ramified. If $J/K$ is the Jacobian variety of a smooth proper and geometrically connected curve $C/K$ of genus $g$, we prove that for any tamely ramified extension $M/K$ of degree greater than a constant, depending on $g$ only, $J_M/M$ has split reduction. This answers some questions of Liu and Lorenzini.
\end{abstract}

\maketitle

\section*{Introduction}

Let $K$ be a complete discrete valuation field. Let $\pi_K$ be a uniformizing element of $K$. Let $v_K$ be the discrete valuation on $K$ normalized such that $v_K(\pi_K)=1$. Let $\O_K$ be the ring of integers. Let $k$ be the residue field which we assume to be algebraically closed of characteristic exponent $p\geq1$. Let $G/K$ be a semi-abelian variety with N\'eron (lft) model $\G/\O_K$ (see \cite[Chapter 10]{BLR}). Let $\G_k=\G\times_{ \O_K} k$ be the special fiber of $\G/\O_K$. We have an exact sequence \begin{equation}0 \r \G^0_k \r \G_k \r \Phi(G) \r 0 \label{es}\end{equation} where $\G^0_k/k$ is the identity component of $\G_k/k$ and $\Phi(G)$ is the group of components which is known to be a finitely generated abelian group. Following \cite[Introduction]{LL} we shall say that $G/K$ has \emph{split reduction} if this exact sequence is split. In other words, $G/K$ has split reduction when $\G_k/k$ is isomorphic to the direct product $\G_k^0\times_k\Phi(G)$ as an algebraic group. When $\Phi(G)$ is finite, $G/K$ has split reduction if and only if for each $\phi\in\Phi(G)$ there exists $x\in\G_k(k)$ lifting $\phi$ and with the same order. 

We know that a semi-abelian variety $G/K$ has split reduction when $p=1$ (see \cite[Proposition 1.4]{LL}), or when $\G^0_k/k$ is semi-abelian (see \cite[Proposition 1.6]{LL}), or when $G/K$ is a tamely ramified abelian variety (\emph{i.e.} the minimal finite separable extension such that $G/K$ acquires semi-abelian reduction is tamely ramified) with toric rank equal to $0$ (see \cite[Corollary 1.9]{LL}). This makes natural the question whether split reduction is automatic for tamely ramified semi-abelian varieties (this is \cite[Question 1.10]{LL}).

Let us also recall the notion of totally not split semi-abelian variety from \cite[\S 1.2]{LL}. We say that an exact sequence of finitely generated abelian groups \[0\r H^0 \r H \r \Psi \r 0\] is \emph{totally not split} (for a fixed $p$) if for each $\phi\in\Psi$ of order $p$ there is no $x\in H$ of order $p$ lifting $\phi$. We say that a semi-abelian variety $G/K$ has \emph{totally not split reduction} if the exact sequence (\ref{es}) is totally not split.

In \cite{LL}, Liu and Lorenzini studied in detail the case of elliptic curves and norm tori together with their duals. For such a semi-abelian variety $G/K$, they found that there exists a constant $c_1$ depending only on the dimension of $G/K$ such that if $G/K$ has totally not split reduction then the Swan conductor (see \cite[\S 2.1]{Se2} for the definition) of $G/K$ is positive and bounded by $c_1$. For some classes of tori they found that there exists a constant $c_2$ depending only on the dimension of $G/K$ and on the absolute ramification index $v_K(p)$ such that if the Swan conductor of $G/K$ is greater than $c_2$ then $G/K$ has split reduction. Finally, they found that there exists a constant $c_3$ depending only on the dimension of $G/K$ such that $G_M/M$ has split reduction for any tamely ramified extension $M/K$ of degree greater than $c_3$. They ask whether similar statements hold in greater generality in \cite[Questions 6.9, 6.10 and 6.11]{LL} respectively. 

The aim of this paper is to provide answers to these questions. We answer negatively \cite[Question 1.10]{LL} in \S\ref{ctrex} by constructing a family of tamely ramified abelian varieties which do not have split reduction. We answer negatively \cite[Questions 6.9]{LL} in \S \ref{no69} by constructing a family of simple abelian varieties which have totally not split reduction but whose Swan conductors cannot be bounded independently of the field of definition. We answer negatively \cite[Questions 6.10]{LL} in \S \ref{no610} by constructing an abelian variety whose Swan conductor is as large as possible but which does not have split reduction. Finally, we give a positive answer to \cite[Questions 6.11]{LL} with Corollary \ref{yes611} which states that Jacobian varieties acquire split reduction after sufficiently large tamely ramified extensions. Our counterexamples are Weil restrictions of elliptic curves, so we give general considerations about the splitting properties of Weil restrictions in Section \ref{III}.

\subsection*{Acknowledgments} I would like to thank my thesis advisor Qing Liu for his guidance, Dino Lorenzini for helpful comments and suggestions and the referee for a careful reading of the manuscript.

\section{A review of Liu and Lorenzini's results}\label{I}

\subsection{The general case}\label{S11} For an abelian group $H$, we will denote by $H_{\tors}$ the torsion subgroup of $H$ and by $H_p$ its $p$-primary part. It is shown in \cite[Proposition 1.4]{LL} that the sequence \[0 \r \G^0_{k,p} \r \G_{k,p} \r \Phi(G)_p \r 0\] is exact and that $G/K$ has (totally not) split reduction if and only if this  exact sequence is (totally not) split. So, the matter of splitting is only non-trivial for $p>1$, what we will assume from now on. Another consequence of this fact is that if $|\Phi(G)_{\tors}|$ is prime to $p$ then $G/K$ has split reduction (see \cite[Corollary 1.5]{LL}). Let us also mention that if $\G^0_k/k$ is a semi-abelian variety then $G/K$ has split reduction (see \cite[Proposition 1.6]{LL}).

\subsection{Tamely ramified semi-abelian varieties} Let $G/K$ be a semi-abelian variety. There exists a minimal finite separable extension $L/K$ such that $G_L=G\times_{ K} L$ has semi-abelian reduction (see  \cite[Theorem 2.3.6.5 (2)]{HN1}). Let $G_{\tor}/K$ and $G_{\ab}/K$ be the toric and the abelian parts of $G/K$. Note that $G_{\tor}\times_K L$ is a split torus (that is to say isomorphic to some power of $\bbG_{m,L}/L$). The following proposition is a combination of \cite[Corollaries 1.7 and 1.9]{LL}.

\begin{prop}\label{1}
Let $G/K$ be a semi-abelian variety whose abelian part $G_{\ab}/K$ has toric rank $0$. Let $L/K$ be the minimal finite separable extension such that $G_L/L$ has semi-abelian reduction. If $L/K$ is tamely ramified then $G/K$ has split reduction.
\end{prop}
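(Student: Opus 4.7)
The plan is to show that the $p$-primary torsion $\Phi(G)_p$ of the component group vanishes, which by \S\ref{S11} is enough to conclude that $G/K$ has split reduction. Since the minimal extension $L/K$ over which $G$ acquires semi-abelian reduction is automatically Galois (it is the fixed field of the inertia subgroup acting unipotently on a Tate module), we set $n = [L:K]$; tameness gives $\gcd(n,p)=1$.

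The key step is to prove that $\Phi(G_L)=0$. The hypothesis that $G_{\ab}$ has toric rank $0$, combined with the defining property of $L$, forces $G_{\ab,L}$ to have good reduction, while $G_{\tor,L}$ is a split torus. Thus the N\'eron lft model of $G_L$ over $\O_L$ is an extension of an abelian scheme (the N\'eron model of $G_{\ab,L}$) by the split torus $\bbG_{m,\O_L}^{\dim G_{\tor}}$; in particular it is a semi-abelian scheme, whose special fiber, being an extension of two connected group schemes, is connected. Hence $\Phi(G_L)=0$.

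To finish, I would invoke the standard norm/restriction argument on N\'eron component groups: the composition of the canonical map $\Phi(G)\to\Phi(G_L)$ with the trace map $\Phi(G_L)\to\Phi(G)$ equals multiplication by $n$ on $\Phi(G)$. Since $\Phi(G_L)=0$, this composite is zero, so $n$ annihilates $\Phi(G)$; as $\gcd(n,p)=1$ we conclude $\Phi(G)_p=0$, as desired.

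The main obstacle is the verification that the N\'eron lft model of $G_L$ really is the claimed semi-abelian scheme, rather than something strictly larger with extra components. In other words, one needs not merely that the identity component of the reduction is semi-abelian, but that the full N\'eron lft model is a semi-abelian scheme. This identification rests on the theory of N\'eron models of semi-abelian varieties developed in \cite[Chapters 7 and 10]{BLR}: the pushout construction extends $G_L$ to a semi-abelian scheme over $\O_L$, and one checks that this scheme satisfies the N\'eron mapping property. Once this is in place, both the connectedness of the special fiber and the norm/restriction step are essentially formal.
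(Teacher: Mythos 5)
Your reduction to showing $\Phi(G)_p=0$ is the same final step as the paper's, and your reading of ``toric rank $0$'' as potentially good reduction for the abelian part is the intended one (cf.\ \S\ref{S15}), so the deduction that $G_{\ab,L}$ has good reduction is fine. The genuine gap is the claim $\Phi(G_L)=0$, which is false whenever $G$ has a nontrivial toric part. The N\'eron \emph{lft} model of $G_L$ is never the semi-abelian $\O_L$-scheme you describe: that scheme fails the N\'eron mapping property already for $Y=\Spec\O_K$, since its $\O_L$-points would give only $\O_L^\times$-valued points in the toric direction rather than all of $L^\times$. Concretely, the N\'eron lft model of the split torus $\bbG_{m,L}^t$ has component group $\bbZ^t$, so $\Phi(G_{\tor,L})\cong\bbZ^t\neq0$; the paragraph in which you flag this as ``the main obstacle'' and then wave it away is precisely where the argument breaks, and the obstacle cannot be removed.

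What you actually have from the exact sequence $\Phi(G_{\tor,L})\r\Phi(G_L)\r\Phi(G_{\ab,L})=0$ is a \emph{surjection} $\bbZ^t\twoheadrightarrow\Phi(G_L)$, so $\Phi(G_L)$ is a finitely generated quotient of a free group; to conclude $\Phi(G_L)_p=0$ (which would be enough for your restriction step) you would need this surjection to be injective, and that requires an additional argument not supplied. Your ``norm/restriction'' step also needs a precise reference: what is available is that the kernel of $\Phi(G)\r\Phi(G_L)$ is killed by $[L:K]$ (\cite[Corollary 5.4]{HN2}, or \cite[Theorem 1]{ELL} in the abelian case), which then forces you to control $\Phi(G_L)_p$. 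The paper avoids all of this by never computing $\Phi(G_L)$: it works entirely over $K$ with the sequence $\Phi(G_{\tor})\r\Phi(G)\r\Phi(G_{\ab})\r0$, kills $\Phi(G_{\tor})_{\tors}$ using the \emph{freeness} of $\Phi(G_{\tor,L})$ (not its vanishing) together with \cite[Corollary 5.4]{HN2}, and bounds $\Phi(G_{\ab})$ directly by \cite[Proposition 1.8]{LL}. You should adopt that two-part decomposition rather than trying to collapse everything into $\Phi(G_L)$.
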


\begin{proof}
We adapt the proof of \cite[Theorem 7.1.2.8]{HN1}. By \cite[Proposition 4.1.1.3]{HN1}, the sequence \begin{equation}\Phi(G_{\tor}) \r \Phi(G) \r \Phi(G_{\ab}) \r 0 \label{es2}\end{equation} is exact. Now, $\Phi(G_{\ab})$ is killed by $\left[L:K\right]^2$ by \cite[Proposition 1.8]{LL}. By \cite[Corollary 5.4]{HN2}, the kernel of the canonical morphism $\Phi(G_{\tor})\r\Phi(G_{\tor}\times_KL)$ is killed by $\left[L:K\right]$. As the group of components of a split torus is free, $\Phi(G_{\tor})_{\tors}$ is in the kernel of this morphism and hence is killed by $\left[L:K\right]$. Now, the exact sequence (\ref{es2}) implies that $\Phi(G)_{\tors}$ is killed by $\left[L:K\right]^3$ and in particular its order is prime to $p$. We have recalled in \S\ref{S11} that this implies that $G/K$ has split reduction.
\end{proof}

\begin{cor}\label{cor}
Let $E/K$ be an elliptic curve. Let $L/K$ be the minimal finite separable extension such that $E_L/L$ has semi-abelian reduction. If $L/K$ is tamely ramified then $E/K$ has split reduction.
\end{cor}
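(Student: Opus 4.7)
The plan is to split into cases according to the reduction type of $E/K$. Being an elliptic curve, $E$ is itself an abelian variety, so in the Chevalley decomposition as a semi-abelian variety one has $E_{\ab} = E$ and $E_{\tor} = 0$. The identity component $\G^0_k$ of the special fiber of the Néron model takes one of three forms: an elliptic curve (good reduction), the torus $\bbG_{m,k}$ (multiplicative reduction), or the additive group $\bbG_{a,k}$ (additive reduction).

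In the first two cases $\G^0_k$ is semi-abelian over $k$, so $E/K$ already has semi-abelian reduction, forcing $L = K$; the split reduction of $E/K$ then follows from \cite[Proposition 1.6]{LL} as recalled in \S\ref{S11}. In the additive case, the toric rank of $E = E_{\ab}$ is $0$, so Proposition \ref{1} applies directly under the tamely ramified hypothesis on $L/K$. The only mild subtlety is that Proposition \ref{1} cannot be invoked in the multiplicative case (where $E$ has toric rank $1$), but that case is absorbed in the preceding semi-abelian-fiber argument; beyond this case split there is no real obstacle.
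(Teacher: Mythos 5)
Your proof is correct and matches the paper's own argument essentially verbatim: split into the semi-abelian cases (where $\G^0_k$ semi-abelian gives split reduction by \cite[Proposition 1.6]{LL}) and the additive case (where $E_{\ab}=E$ has toric rank $0$ and Proposition~\ref{1} applies). Your remark that Proposition~\ref{1} fails to cover the multiplicative case is a useful clarification of why the case split is structured this way, but otherwise the two proofs are the same.
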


\begin{proof}
If $E/K$ has semi-abelian (multiplicative or good) reduction then this is a consequence of our last remark in \S\ref{S11}. Otherwise, the toric rank of $E/K$ is $0$ and this follows from Proposition \ref{1}. 
\end{proof}

\subsection{Two particular cases}\label{S15} Let us insist on two particular cases of Proposition \ref{1}. First, an abelian variety $A/K$ which has potentially good reduction over a tamely ramified extension $L/K$ has split reduction. Note that one can prove this using again \cite[Corollary 5.4]{HN2} and the fact that the group of components of an abelian variety with good reduction is trivial. Second, a torus $T/K$ which splits over a tamely ramified extension $L/K$ has split reduction. Indeed, $T_L/L$ is isomorphic to a split torus if and only if $T_L/L$ has semi-abelian reduction.

\subsection{Non-Archimedean uniformization} Let us recall that an abelian variety $A/K$ admits a non-Archimedean uniformization as follows (see \cite[Theorem 1.2]{BX}). There exist a semi-abelian variety $G/K$ and a lattice $\Lambda/K$ in $G/K$ such that the sequence of rigid analytic groups \[0 \r \Lambda \r G \r A \r 0 \] is exact and such that $G/K$ is an algebraic extension \[ 0 \r T \r G \r B \r 0\] of an abelian variety $B/K$ with potentially good reduction by a torus $T/K$. 

Let us denote by $\delta(G/K)$ the Swan conductor of $G/K$. Recall that $\delta(G/K)$ is zero if and only if $G/K$ acquires semi-abelian reduction after a tamely ramified extension $L/K$. Considering this non-Archimedean uniformization and \S\ref{S15} it is natural to ask the following question.

\begin{que}\cite[Question 1.10]{LL}\label{110}

Let $G/K$ be a semi-abelian variety. Let $L/K$ be the minimal finite separable extension such that $G_L/L$ has semi-abelian reduction. If $L/K$ is tamely ramified, is it true that $G/K$ has split reduction ? In other words, is it true that the Swan conductor $\delta(G/K)$ is positive if $G/K$ does not have split reduction ?
\end{que}

In spite of all this evidence, we will show that the answer to this question is no by constructing a family of counterexamples in \S\ref{ctrex}. Note that we need to consider abelian varieties over $K$ of dimension $>1$ (by Corollary \ref{cor}), positive toric rank (by Proposition \ref{1}) and which do not have semi-abelian reduction over $K$ (by \S\ref{S11}).

\subsection{The case of elliptic curves} The main results in \cite{LL} about elliptic curves are gathered in the following theorem.

\begin{thm}\cite[Theorem 2.1 and Proposition 3.3]{LL}\label{21}

Let $E/K$ be an elliptic curve and let $\delta(E/K)$ be its Swan conductor. 

\begin{enumerate}[(1)]
\item If $E/K$ has totally not split reduction, then \[1\leq\delta(E/K)\leq3.\] 
\item If $E/K$ has not split but not totally not split reduction, then $E$ is of type $\mathbf{I}_{2n}^{\ast}$ for some integer $n$ and \[1\leq\delta(E/K)\leq 2n+3.\] 
\item Let $M/K$ be a tamely ramified extension of degree $\geq4$. Then $E_M/M$ has split reduction.
\end{enumerate}
\end{thm}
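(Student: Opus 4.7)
The proof rests on the Kodaira--N\'eron classification of reduction types of elliptic curves, Tate's algorithm, and Ogg's conductor formula
\[f(E/K) = v(\Delta_{\min}) - m + 1,\]
where $m$ is the number of geometric irreducible components of the special fiber of the minimal regular model, together with the decomposition $f(E/K) = \epsilon + \delta(E/K)$ with $\epsilon \in \{0,1,2\}$ the semi-stable defect (0 for good, 1 for multiplicative, 2 for additive reduction).

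For parts (1) and (2), the strategy is a case analysis over Kodaira types. Multiplicative reduction always gives split reduction by \S\ref{S11}, so only additive types are at issue. Reading off the component groups from the Kodaira tables, non-split reduction requires $p \in \{2,3\}$ and can occur only in types $\mathrm{III}, \mathrm{III}^\ast, \mathrm{IV}, \mathrm{IV}^\ast, \mathbf{I}_0^\ast$, and $\mathbf{I}_n^\ast$ for $n \geq 1$. Because $\G_k^0 = \bbG_a$ for additive reduction, the obstruction to lifting an order-$p$ element of $\Phi(E)$ is an explicit class in $\bbG_a(k)/p\bbG_a(k)$, and a direct inspection separates totally-not-split from merely not-split reduction: a partial lifting is possible exactly when $\Phi(E) \cong (\bbZ/2)^2$, which forces $E$ into type $\mathbf{I}_{2n}^\ast$ in the exceptional case (2). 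To bound $\delta(E/K)$, I combine Ogg's formula with the Tate-algorithm tables for the minimal discriminant valuation in residue characteristic 2 and 3 -- for each relevant type, the maximum of $v(\Delta_{\min})$ is known explicitly, and subtracting $m - 1$ and $\epsilon = 2$ yields the stated bounds. The lower bound $\delta(E/K) \geq 1$ follows from Corollary~\ref{cor}: if $\delta = 0$, then $E$ acquires semi-stable reduction over a tamely ramified extension and hence has split reduction, contradicting the hypothesis.

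For part (3), I would analyze how both the Kodaira type and the extension class (\ref{es}) transform under a tame base change $M/K$ of ramification degree $e \geq 4$. Multiplicative reduction is preserved and stays split. For additive reduction, I would run Tate's algorithm on the base-changed minimal Weierstrass equation over $M$ and check, type by type, that the threshold $e \geq 4$ is large enough to land in a reduction with split component group -- either because $\Phi(E_M)$ becomes of order prime to $p$ (giving split reduction by \S\ref{S11}) or by an explicit computation of the new extension class.

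The main obstacle is the refined Tate-algorithm analysis in residue characteristic 2 and 3. The generic-characteristic tables fail to capture wild-inertia contributions to the minimal discriminant and to the extension class, so one must use the explicit char-$p$ tables and keep track of how wild inertia interacts with a tame base change. The sharpness of $e \geq 4$ in (3) reflects the maximal order 4 of the tame cyclic quotient of inertia acting on $\Phi(E)$ in type $\mathbf{I}_n^\ast$, signalling the delicacy of the last case.
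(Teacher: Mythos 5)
This theorem is \emph{quoted}, not proved, in the paper under review: it is cited verbatim from Liu and Lorenzini's work (the reference [LL, Theorem 2.1 and Proposition 3.3]). There is therefore no internal proof in the present paper to compare against. Nevertheless your sketch contains a genuine gap that is worth flagging.

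The central step of your argument for parts (1) and (2) is the claim that ``for each relevant type, the maximum of $v(\Delta_{\min})$ is known explicitly,'' from which you propose to read off the bound on $\delta(E/K)$ via Ogg's formula. This is false in the wild residue characteristics $p\in\{2,3\}$, which are precisely the only characteristics where non-split reduction can occur. For a fixed additive Kodaira type the number $m$ of geometric components is fixed, but the conductor exponent $f = 2 + \delta(E/K)$, and hence $v(\Delta_{\min}) = f + m - 1$, is \emph{not} bounded by any constant depending only on the type: for instance in type $\mathbf{IV}$ with $p=3$ one has $v(\Delta_{\min}) = 3v_K(3) + 4$ generically, so $\delta$ can be as large as $3v_K(3)$. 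Thus trying to bound $\delta$ by first bounding $v(\Delta_{\min})$ from a Tate-algorithm table is circular --- the bound on $v(\Delta_{\min})$ is equivalent to the bound on $\delta$ you want. The real content of the theorem is that the totally-not-split hypothesis constrains the \emph{Weierstrass coefficients} (through the obstruction class in $\G_k^0(k)\cong k$ that you correctly identify, which forces certain coefficient valuations to be small), and only then does Ogg's formula deliver the numerical bound. This is exactly what the present paper emulates in its own proof of the proposition on Brumer--Kramer's bound: it invokes [LL, Proposition 2.11(b)] to deduce $v_K(\Delta)\leq 4n+9$ for type $\mathbf{I}_{2n}^{\ast}$ \emph{from} a not-split hypothesis, not from the Kodaira type alone. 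Your lower bound $\delta\geq1$ via Corollary \ref{cor} is correct, and your reduction to additive types and identification of $\mathbf{I}_{2n}^{\ast}$ as the only partially-split possibility (via $\Phi(E)\cong(\bbZ/2\bbZ)^2$) are sound; the gap is entirely in the upper-bound step. For part (3) the outline is plausible, but the concluding remark that the threshold $4$ reflects ``the maximal order $4$ of the tame cyclic quotient of inertia acting on $\Phi(E)$ in type $\mathbf{I}_n^{\ast}$'' is confused: for $\mathbf{I}_n^{\ast}$ the tame inertia quotient has order $2$, and it is the component group $\Phi(E)\cong\bbZ/4\bbZ$ (for $n$ odd) that has order $4$. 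The sharpness of the threshold comes from the behavior of the component group under tame base change, not from an order-$4$ inertia action.
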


\subsection{The case of quotient tori}\label{S110} Let $L/K$ be a finite separable extension. Consider the Weil restriction $\Res_{L/K}\bbG_{m,L}$ of $\bbG_{m,L}/L$ under the extension $L/K$ (see \S\ref{S21} for the definition and first properties of Weil restriction). Let $S/K$ be the \emph{quotient torus} $(\Res_{L/K}\bbG_{m,L})/\bbG_{m,K}$. Define the \emph{norm torus} $T/K$, also denoted by $\Res^1_{L/K}\bbG_{m,L}$ in the sequel, to be the kernel of the norm map \[\Res_{L/K}\bbG_{m,L}\stackrel{\Norm_{L/K}}{\longrightarrow}\bbG_{m,K}.\] By \cite[Lemma 4.1]{LL}, $S/K$ is isomorphic to the dual torus of $T/K$. Moreover, if $L/K$ is a cyclic extension then $S/K$ is isomorphic to $T/K$. The main results about tori in \cite{LL} are about this kind of tori and are gathered in the following theorem.

\begin{thm}\cite[Theorem 4.6 and Corollary 4.11]{LL}\label{46}

Let $S/K$ be the quotient torus $(\Res_{L/K}\bbG_{m,L})/\bbG_{m,K}$ and let $\delta(S/K)$ be its Swan conductor. 
\begin{enumerate}[(1)]
\item The torus $S/K$ has totally not split reduction if and only if \[1\leq\delta(S/K)\leq\dim(S).\]
\item If $\delta(S/K)\geq(\dim(S)+1)\ord_p(\dim(S)+1)v_K(p)$, then $S/K$ has split reduction.
\item Let $M/K$ be a tamely ramified extension of degree $\geq\dim(S)+1$. Then, $S_M/M$ has split reduction.
\end{enumerate}
\end{thm}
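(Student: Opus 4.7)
The plan is to reduce all three claims to a Galois-cohomological computation on the character lattice $\hat S$ of $S/K$. When $L/K$ is Galois with group $\Gamma$, this lattice is the augmentation ideal in $\bbZ[\Gamma]$, of $\bbZ$-rank $\dim(S)=[L:K]-1$; the general case reduces by passing to the Galois closure. Because $S$ is a torus, Raynaud's description of its N\'eron model gives explicit cohomological formulas: if $I_0\subset\Gamma$ denotes the inertia subgroup and $(I_i)_{i\geq0}$ is its lower-numbering ramification filtration, then $\Phi(S)$ appears as a Tate cohomology group of $I_0$ with coefficients in $\hat S$, and the Swan conductor $\delta(S/K)$ equals the Swan conductor of the rational representation $\hat S\otimes\bbQ$ of $I_0$.

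For part (1), if $\delta(S/K)=0$ then $L/K$ is tame and \S\ref{S15} forces split reduction, contradicting the totally not split hypothesis; hence $\delta(S/K)\geq 1$. For the upper bound, writing
\[
\delta(S/K)=\sum_{i\geq1}\frac{1}{[I_0:I_i]}\bigl(\dim(S)-\dim(\hat S\otimes\bbQ)^{I_i}\bigr),
\]
a careful analysis shows that under the totally not split hypothesis only one ramification break contributes to this sum, yielding $\delta(S/K)\leq\dim(S)$.

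For part (2), I argue contrapositively: if $S/K$ does not have split reduction, then $\Phi(S)_p$ is nonzero, and the Tate-cohomological description bounds its exponent by the order of a Sylow $p$-subgroup of $I_0$. Combining the Hasse--Arf theorem, which turns lower-numbering ramification breaks into upper-numbering breaks each bounded by $v_K(p)$, with a counting argument over the at most $\dim(S)+1$ potential breaks and an extra factor $\ord_p(\dim(S)+1)$ from the $p$-part of $|I_0|$, yields the stated Swan conductor bound.

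For part (3), base-change to a tamely ramified $M/K$ of degree $\geq\dim(S)+1$. Then $S_M$ is again a quotient torus of the same type, and \cite[Corollary 5.4]{HN2} gives that the kernel of $\Phi(S)\to\Phi(S_M)$ is annihilated by $[M:K]$. A divisibility argument, exploiting that the torsion of $\Phi(S)$ is controlled by $[L:K]=\dim(S)+1$, forces the $p$-primary part of $\Phi(S_M)$ to vanish, so by \S\ref{S11} we obtain split reduction. The main obstacle is making the conductor estimate in part~(2) sharp enough to produce the exact constant; the rest is manipulation of cohomological invariants of lattices and standard properties of the ramification filtration.
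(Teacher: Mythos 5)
This result is stated in the paper purely as a citation of \cite[Theorem 4.6 and Corollary 4.11]{LL}; the paper itself gives no proof, so there is no internal argument to compare against. That said, your sketch contains genuine gaps that prevent it from standing on its own.

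For part (1) you only address one implication of the biconditional: you argue that totally not split reduction forces $1\leq\delta(S/K)\leq\dim(S)$, but you never address the converse, namely that the numerical bounds on the Swan conductor \emph{imply} totally not split reduction. That converse is the harder and more surprising direction; it requires an explicit description of $\Phi(S)$ and of the lifting obstruction in terms of the ramification filtration, and nothing in your cohomological framing produces it. Even for the direction you do treat, the crucial claim that ``only one ramification break contributes'' is asserted without justification.

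For part (3) the argument is actually incorrect as stated. Knowing from \cite[Corollary 5.4]{HN2} that the kernel of $\Phi(S)\to\Phi(S_M)$ is killed by $[M:K]$, which is prime to $p$, tells you the $p$-primary part \emph{injects} into $\Phi(S_M)$; combined with the component-series formula $|\Phi(S_M)|=[M:K]^{t}|\Phi(S)|$ (with $t$ the toric rank), the $p$-parts have the same order, so $\Phi(S_M)_p\cong\Phi(S)_p$ does not vanish whenever $\Phi(S)_p$ is nontrivial. Your ``divisibility argument\ldots forces the $p$-primary part of $\Phi(S_M)$ to vanish'' is therefore false in general. The correct mechanism cannot be that the $p$-torsion disappears after a tame base change; it must be that the $p$-torsion components acquire lifts of the correct order in the special fiber after such a base change. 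This is exactly the kind of statement that Theorem~\ref{thm} in Section~\ref{IV} establishes for Jacobians via Edixhoven's filtration, and the proof in \cite{LL} for quotient tori likewise tracks how the unipotent part of $\G_k^0$ shrinks under tame base change, rather than arguing that $\Phi$ loses its $p$-part. Part (2) as written is a list of ingredients (Hasse--Arf, counting breaks, the $\ord_p$ factor) rather than an argument; without making the ``counting'' precise it cannot be checked.
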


\subsection{Some further questions} The results recalled in the last two theorems lead naturally to the following questions on possible generalizations.

\begin{que}\cite[Question 6.9]{LL}\label{69}

Let $g$ be a positive integer and consider all abelian varieties $A$ of dimension $g$ over a discrete valuation field $K$ with toric rank equal to $0$. Is there a constant $c_1$ depending on $g$ but not on the field $K$ such that if $A/K$ has totally not split reduction then the Swan conductor $\delta(A/K)$ is bounded by $c_1$ ?
\end{que}

This question has a positive answer for elliptic curves (see Theorem \ref{21} (1)) or for abelian varieties uniformized by quotient tori as above (see \cite[Theorem 6.6]{LL} which relies on Theorem \ref{46} (1)). As mentioned in \cite[Question 6.9]{LL} one can construct obvious counterexamples by taking the product of an abelian variety with totally not split reduction by an elliptic curve with trivial group of components and large Swan conductor. We will construct in \S \ref{no69} a family of simple abelian varieties which have totally not split reduction and whose Swan conductors really depend on the field of definition so that the answer is no even for simple abelian varieties.

\begin{que}\cite[Question 6.10]{LL}\label{610}

Assume that $K$ is of characteristic $0$. Let $A/K$ be an abelian variety of dimension $g$. The Swan conductor of $A/K$ is bounded by a constant depending on $g$ and the absolure ramification index $v_K(p)$ only by \cite[Proposition 6.2 and Proposition 6.11]{BK}. Is there a bound $c_2$ depending on $g$ and $v_K(p)$ only such that if $\delta(A/K)>c_2$ then $A/K$ has split reduction ?
\end{que}

Of course, we want $c_2$ to be smaller than the absolute bound of \cite{BK}. Such a bound exists for quotient tori (see Theorem \ref{46} (2)) but we will show that the answer is no in general by giving in \S \ref{no610} an example of abelian variety whose Swan conductor achieves the bound from \cite{BK} but which does not have split reduction. Let us however mention the following result which implies the existence of such a bound for elliptic curves.

\begin{prop}
Assume that $K$ is of characteristic $0$. Let $E/K$ be an elliptic curve. If $\delta(E/K)$ achieves the bound from \cite{BK}, then $E/K$ has split reduction.
\end{prop}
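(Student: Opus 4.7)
The plan is to prove the contrapositive: assume $E/K$ does not have split reduction, and deduce that $\delta(E/K)$ is strictly smaller than the Bloch--Kato bound. Since an elliptic curve whose minimal semi-abelian extension is tamely ramified has split reduction (see \S\ref{S11} and Corollary~\ref{cor}), we may assume $p\in\{2,3\}$. By Theorem~\ref{21}, one of the following holds: either (a) $E/K$ has totally not split reduction with $\delta(E/K)\le 3$, or (b) $E/K$ is of Kodaira type $\mathbf{I}_{2n}^*$ for some $n\ge 0$ with $\delta(E/K)\le 2n+3$.

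For case (a), I would compare the bound $3$ directly against the explicit Bloch--Kato bound from \cite[Propositions~6.2 and 6.11]{BK}, which for $g=1$ grows (at least) linearly in $v_K(p)$ and exceeds $3$ for every value of $v_K(p)$ at which a nonzero Swan conductor for an elliptic curve is possible. Any residual small cases can be handled by inspecting the finitely many reduction types via Tate's algorithm.

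For case (b), the idea is to exploit the potentially multiplicative reduction. Fix a quadratic character $\chi$ of the absolute Galois group of $K$ such that the quadratic twist $E^\chi$ has multiplicative reduction; then $V_\ell(E)\cong V_\ell(E^\chi)\otimes\chi$, and $V_\ell(E^\chi)$ has unipotent inertia action and hence trivial Swan conductor. Because $V_\ell(E^\chi)$ restricts to the trivial representation on the wild inertia subgroup, $V_\ell(E)$ restricts to $\chi^{\oplus 2}$ on wild inertia, so $\delta(E/K)=2\,\delta(\chi)$. A standard analysis of the $p$-adic logarithm on higher unit groups bounds $\delta(\chi)$ linearly in $v_K(2)$, so $\delta(E/K)$ is at most a small multiple of $v_K(2)$, strictly smaller than the Bloch--Kato bound — the latter being realized by representations with wilder inertia, as arising, for instance, from elliptic curves with potentially good reduction of type $\mathbf{II}^*$ or $\mathbf{III}^*$.

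The main obstacle is precisely the comparison in the small-$v_K(p)$ regime: there both the case-by-case bounds coming from Theorem~\ref{21} and the Bloch--Kato bound are small, and strict inequality is not automatic but requires the sharp form of the BK bound. This seems to require a careful extraction from \cite{BK} of the bound for $g=1$ in residue characteristics $2$ and $3$, together with a finite tabulation via Tate's algorithm to exclude sporadic equality in the edge cases.
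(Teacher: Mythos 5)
Your contrapositive strategy is sound, and the decomposition into cases (a) totally not split and (b) type $\mathbf{I}_{2n}^{\ast}$ matches the paper's, but your treatment of case (b) is genuinely different and cleaner. The paper handles the $p=2$, type $\mathbf{I}_{2n}^{\ast}$ case by combining Ogg's formula with a fairly intricate analysis of the valuations of Weierstrass coefficients (via a table from \cite[Proposition 2.11]{LL}), which takes most of a page. Your twist argument gives the same conclusion in one stroke: for $n\ge 1$ the curve is potentially multiplicative, $V_{\ell}(E)|_P \cong \chi^{\oplus 2}|_P$ on wild inertia, hence $\delta(E/K) = 2\delta(\chi) \le 4v_K(2) < 6v_K(2)$ (using that a quadratic character of a $2$-adic field has Artin conductor at most $2v_K(2)+1$), while $n=0$ is absorbed into case (a) since then $\delta \le 3$. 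This is the kind of conceptual argument one would prefer to the computational one. Two small cautions. First, you should say explicitly that the twist trick only applies when $n\ge 1$ ($\mathbf{I}_0^{\ast}$ is potentially good) and fold $n=0$ into case (a). Second, your claim in case (a) that the Bloch--Kramer bound \emph{exceeds} $3$ whenever a nonzero Swan conductor is possible is false: for $p=3$ and $v_K(3)=1$ the bound is exactly $3$, which is precisely why the paper's $p=3$ argument does not compare $\delta\le 3$ against the bound but instead invokes the sharper fact that, when $p=3$, only types $\mathbf{III}$ and $\mathbf{III}^{\ast}$ can fail to split and for those $\delta(E/K)\in\{1,2\}$. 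You do flag this as the ``main obstacle'' and propose a Tate-algorithm tabulation, which is correct and is essentially what the paper does, so the gap is acknowledged rather than missed; but it is worth knowing that the strict inequality in the edge case really does require looking at the Kodaira type, not just at the numerical bounds.
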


\begin{proof}
If $p\neq2,3$, then $E/K$ is tamely ramified and hence has split reduction by Corollary \ref{cor}. If $p=3$, then the bound of \cite{BK} is $3v_K(3)\geq3$ (see also \cite{LRS}). Only elliptic curves with reduction type $\mathbf{III}$ or $\mathbf{III}^{\ast}$ may not have split reduction and in this case we have $\delta(E/K)=1$ or $2$ by \cite[\S 2.13 and \S 2.14]{LL}, whence the result.

Let $p=2$. In this case, the bound of \cite{BK} is $6v_K(2)>3$ (see also \cite{LRS}). Therefore by Theorem \ref{21} (1) an elliptic curve whose conductor achieves the bound of \cite{BK} does not have totally not split reduction. Assume that $E/K$ does not have split reduction and that $\delta(E/K)=6v_K(2)$. By Theorem \ref{21} (b) we know that $E/K$ has reduction type $\mathbf{I}_{2n}^{\ast}$ for some integer $n$. Now, we will follow the proof of \cite[Proposition 2.11 (b)]{LL}. Let $\Delta$ be the minimal discriminant of $E/K$. Then \cite[Proposition 2.11 (b)]{LL} implies that $v_K(\Delta)\leq 4n+9$. Assume $E/K$ is given by a minimal Weierstrass equation \[y^2+a_1xy+a_3x=x^3+a_2x^2+a_4x+a_6,\] with $a_i\in\O_L$ for $i\in\left\{1,2,3,4,6\right\}$. As in \cite{LL}, let us set $e=v_K(2)$ and $v=v_K(a_1)$. By Ogg's formula we have \[\begin{array}{lll}v_K(\Delta)&=&2+\delta(E/K)+(2n+5-1)\\&=&6e+2n+6.\end{array}\] 

Consider first the case where $e<v$. We have $v_K(\Delta)>4e+2n+6$. According to the table of valuations in \cite{LL}, $4e+2n+6$ has to be greater or equal to $4n+8$ or $3e+3n+7$. If $4e+2n+6\geq 4n+8$ then $2e\geq n+1$ and $v_K(\Delta)\geq5n+9$ which is false. If $4e+2n+6\geq 3e+3n+7$ then $e\geq n+1$ and $v_K(\Delta)\geq 8n+12$ which is false.

Consider the case where $e\geq v$. If $v=1$, then $v_K(\Delta)=2n+8$ but this is false so that we can assume $v>1$. We have $v_K(\Delta)>4v+2n+4$. According to the table of valuations in \cite{LL}, $4v+2n+4$ has to be greater or equal to $4n+8$ or $3v+3n+6$. If $4v+2n+4\geq 4n+8$ then $2v\geq n+2$ and $v_K(\Delta)\geq5n+12$ which is false. If $4v+2n+4\geq 3v+3n+6$ then $v\geq n+2$ and $v_K(\Delta)\geq 8n+18$ which is false. Hence, there is a contradiction, i.e. $\delta(E/K)=6v_K(2)$ implies that $E/K$ has split reduction.
\end{proof}

\begin{que}\cite[Question 6.11]{LL}\label{611}

Let $G/K$ be a semi-abelian variety of dimension $g$ that does not have split reduction.

\begin{enumerate}[(1)]
\item Is it always possible to find a tamely ramified extension $M/K$ such that $G_M/M$ has split reduction ?
\item Does there exist a constant $c_3$ depending on $g$ only such that if $M/K$ is any tamely ramified extension of degree greater than $c_3$, then $G_M/M$ has split reduction ?
\end{enumerate}
\end{que}

These questions have positive answers for elliptic curves or quotient tori (Theorem \ref{21} (3) and Theorem \ref{46} (3)). We will show that the answer to these questions is true for Jacobian varieties over $K$ in Corollary \ref{yes611}.

\section{Splitting properties of tamely ramified semi-abelian varieties}\label{II}

\subsection{Weil restriction}\label{S21} We recall below standard facts about Weil restrictions and we refer the reader to \cite[Section 7.6]{BLR} for details. Let $S'\r S$ be a morphism of schemes. Let $X'$ be a scheme over $S'$. The \emph{Weil restriction $\Res_{S'/S}X'$ of $X'$ under the morphism $S'\r S$}, when it exists, is the scheme over $S$ representing the functor on schemes over $S$ defined by \[T\mapsto \Hom_{S'}(T\times_S S',X').\] When $X'/S'$ is quasi-projective and $S'\r S$ is finite and locally free then $\Res_{S'/S}X'$ always exists. The notion of Weil restriction commutes with base change in the following sense. If $T\r S$ is a morphism of base change and if we write $T'=T\times_S S'$ then for any scheme $X'/S'$ there is a canonical isomorphism \begin{equation}\Res_{T'/T}(X'\times_{S'}T')\stackrel{\simeq}{\r}(\Res_{S'/S}X')\times_S T.\label{bc}\end{equation}

\subsection{Weil restriction and N\'eron models}\label{S22} Let $L/K$ be a finite separable extension of degree $d$. Let $G/L$ be a semi-abelian variety of dimension $g$. Then $\Res_{L/K}G$ is a semi-abelian variety over $K$ of dimension $d\cdot g$. Let $\O_L$ be the ring of integers of $L$. Let $\G/\O_L$ be the N\'eron model of $G/L$ and let $\G^0/\O_L$ be its identity component. Then, $\Res_{O_L/\O_K}\G$ is the N\'eron model of $\Res_{L/K}G$ over $\O_K$ (\cite[Proposition 7.6.6]{BLR}). The following proposition is inspired by \cite[Lemma 3.1]{NX}.

\begin{prop}\label{cpnt}
The identity component of $\Res_{\O_L/\O_K}\G$ is $\Res_{\O_L/\O_K}\G^0$ and we have the following isomorphism \[\Phi(\Res_{L/K}G)\stackrel{\simeq}{\r}\Phi(G).\]
\end{prop}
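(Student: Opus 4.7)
My plan is to realise $\Res_{\O_L/\O_K}\G^0$ as an open subgroup scheme of $\Res_{\O_L/\O_K}\G$ with connected fibres, then to identify the cokernel on the special fibre with $\Phi(G)$; both assertions of the proposition then follow simultaneously. Since $\G^0\hookrightarrow\G$ is an open immersion of smooth $\O_L$-group schemes, the general properties of Weil restriction (\cite[Section 7.6]{BLR}) yield an open immersion of smooth $\O_K$-group schemes $\Res_{\O_L/\O_K}\G^0\hookrightarrow\Res_{\O_L/\O_K}\G$. By (\ref{bc}), both sides have generic fibre $\Res_{L/K}G$, which is connected. The substance therefore lies on the special fibre: setting $A:=\O_L\otimes_{\O_K}k$, a local artinian $k$-algebra with residue field $k$ (since $k$ algebraically closed forces $L/K$ to be totally ramified), (\ref{bc}) identifies the special fibres with $\Res_{A/k}\G^0_A\hookrightarrow\Res_{A/k}\G_A$.

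The first key step is to show that $\Res_{A/k}\G^0_A$ is connected. The surjection $A\twoheadrightarrow k$ induces a morphism of smooth $k$-group schemes $\pi:\Res_{A/k}\G^0_A\r\G^0_k$, whose kernel $N$ represents the functor $R\mapsto\ker(\G^0(R\otimes_k A)\r\G^0(R))$. Smoothness of $\G^0$ together with the henselianness of $A$ makes $\pi$ smooth and surjective. Filtering $A$ by powers of its maximal ideal and using the standard description of the points of a smooth scheme on an infinitesimal thickening, $N$ is an iterated extension of vector group schemes modelled on $\Lie(\G^0_k)$, hence connected. Combined with the connectedness of $\G^0_k$, this forces $\Res_{A/k}\G^0_A$ to be connected.

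Finally I would identify the quotient. Since $k$ is algebraically closed, $\Phi(G)$ is a constant étale $k$-group scheme and the quotient $\G_A/\G^0_A$ equals its constant extension $\Phi(G)_A$. A direct computation using that $A$ is geometrically connected over $k$, so that $\pi_0(T\otimes_k A)=\pi_0(T)$ for every $k$-scheme $T$, yields $\Res_{A/k}\Phi(G)_A\cong\Phi(G)$. Weil restriction being left exact as a right adjoint, we obtain an exact sequence
\[0\r\Res_{A/k}\G^0_A\r\Res_{A/k}\G_A\r\Phi(G),\]
and surjectivity on the right follows from the factorisation $\G(A)\twoheadrightarrow\G(k)\twoheadrightarrow\Phi(G)$ on $k$-points (smoothness plus henselianness, then $k=\bar k$), which suffices since the target is étale. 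Hence $\Res_{A/k}\G^0_A$ is the identity component of $\Res_{A/k}\G_A$ with quotient $\Phi(G)$, giving both claims. The main obstacle will be checking rigorously that the unipotent kernel $N$ is connected, which requires unwinding the Greenberg-style description of $\Res_{A/k}\G^0_A$ via the maximal-ideal filtration of $A$.
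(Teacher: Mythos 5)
Your argument is correct, and it reaches the same conclusion as the paper but by a noticeably more hands-on route. The paper's proof is shorter: it quotes \cite[Proposition A.5.9]{CGP} as a black box for the assertion that $\Res_{\O_L/\O_K}\G^0$ has connected fibers, then observes (as you do) that Weil restriction preserves open immersions, so $\Res_{\O_L/\O_K}\G^0$ is an open subgroup scheme with connected fibers containing the identity section and hence equals $(\Res_{\O_L/\O_K}\G)^0$. Where you differ most is (a) in re-proving the connectedness by hand via the filtration of $A=\O_L\otimes_{\O_K}k$ by powers of its maximal ideal, exhibiting $\Res_{A/k}\G^0_A$ as an iterated extension of $\G^0_k$ by vector groups modelled on $\Lie(\G^0_k)$ — which is essentially the content of the CGP lemma in this special case, so you are unwinding the citation rather than replacing it; and (b) in identifying $\Phi(\Res_{L/K}G)$ with $\Phi(G)$. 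For (b), the paper avoids all scheme-theoretic manipulation on the special fiber and instead runs the chain of isomorphisms on $\O_K$-points: since $k$ is algebraically closed and the group schemes are smooth, $\Phi$ is computed by $\O_K$-points modulo $\O_K$-points of the identity component, and the universal property of Weil restriction immediately turns $(\Res_{\O_L/\O_K}\G)(\O_K)/(\Res_{\O_L/\O_K}\G^0)(\O_K)$ into $\G(\O_L)/\G^0(\O_L)=\Phi(G)$. Your version instead builds the exact sequence $0\r\Res_{A/k}\G^0_A\r\Res_{A/k}\G_A\r\Phi(G)$ by left-exactness and proves surjectivity on $k$-points; this is correct and perhaps more illuminating scheme-theoretically, but requires the extra observations about $\pi_0(T\otimes_k A)=\pi_0(T)$ and about étale quotients over the Artinian local ring $A$ being constant. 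Net effect: your proof trades a one-line citation for a transparent filtration argument, and a slick points-level computation for an explicit exact sequence; both are legitimate, the paper's is tighter, yours makes the mechanism visible.
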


\begin{proof}
It follows from \cite[Proposition A.5.9]{CGP} that the fibers of $\Res_{\O_L/\O_K}\G^0$ are connected. From the immersion \[\G^0 \r \G\] we get a morphism \[\Res_{\O_L/\O_K}\G^0 \r \Res_{\O_L/\O_K}\G\] which factorizes through \[\Res_{\O_L/\O_K}\G^0 \r (\Res_{\O_L/\O_K}\G)^0.\] Now, since the Weil restriction commutes with open and closed immersions by \cite[Proposition 7.6.2]{BLR}, we have \[\Res_{\O_L/\O_K}\G^0\stackrel{\simeq}{\r}(\Res_{\O_L/\O_K}\G)^0.\] Finally, we have \[\begin{array}{lll}\Phi(\Res_{L/K}G)&\stackrel{\simeq}{\leftarrow}&(\Res_{\O_L/\O_K}\G)(\O_K)/(\Res_{\O_L/\O_K}\G^0)(\O_K)\\&\stackrel{\simeq}{\r}&
\G(\O_L)/\G^0(\O_L)\\&\stackrel{\simeq}{\r}&\Phi(G).\end{array}\]
\end{proof}

\subsection{Weil restriction of Tate curves} Let $L/K$ be a finite separable extension of degree $d\geq2$. Let $\pi_L$ be a uniformizing element of $L$. Let $v_L$ be the discrete valuation on $L$ normalized such that $v_L(\pi_L)=1$. Let $q\in L^{\times}$ be such that $v_L(q)>0$ and let $E/L$ be the Tate curve associated to $q$ (see \cite[Theorem V.3.1]{Si2}). Set $n=v_L(q)$. Let $\E/\O_L$ be the N\'eron model of $E/L$ and let $\E^0/\O_L$ be its identity component. Let us recall that we have the following isomorphisms (see \cite[Section V.4]{Si2})  \[\begin{array}{rcl}E(L)&\cong& L^{\times}/q^{\bbZ},\\ \E^0(\O_L)&\cong&\O_L^{\times}, \\ \Phi(E)&\cong&\bbZ/n\bbZ.\end{array}\]

Let us consider the abelian variety $A=\Res_{L/K} E/K$ obtained by Weil restriction under the extension $L/K$. In particular, $\dim(A)=d$. The N\'eron model of $A$ over $\O_K$ is $\A=\Res_{\O_L/\O_K}\E$ (see \S\ref{S22}) and its identity component is $\A^0=\Res_{\O_L/\O_K}\E^0$ (by Proposition \ref{cpnt}). The special fiber $\E_k/k$ is isomorphic to $\bbG_{m,k}/k$ (see \cite[Theorem V.5.3]{Si2}), thus we have by the base change formula (\ref{bc}) \[\A_k^0\cong\Res_{(\O_L/\pi_K\O_L)/k}\bbG_{m,(\O_L/\pi_K\O_L)}\] and then \[\A_k^0(k)\cong(\O_L/\pi_K\O_L)^{\times}.\] 

As $L/K$ is totally ramified, $\O_L$ is generated by $\pi_L$ over $\O_K$ and $\pi_L$ satisfies an Eisenstein equation \[t^d+a_1t^{d-1}+\cdots+a_d=0\] with $a_i\in\pi_K\O_K$ and $v_K(a_d)=1$. As in the proof of \cite[Proposition 3.2]{NX}, we have a split exact sequence \begin{equation}0 \r 1+\pi_L\O_L/\pi_K\O_L \r (\O_L/\pi_K\O_L)^{\times} \r k^{\times} \r 0.\label{untor}\end{equation} The group $k^{\times}$ is the group of closed points of a one-dimensional torus over $k$ and the group $1+\pi_L\O_L/\pi_K\O_L$ is the group of closed point of a unipotent algebraic group over $k$. Indeed, one has the composition serie \[\left\{1\right\}=1+\pi_L^d\O_L/\pi_K\O_L\subseteq1+\pi_L^{d-1}\O_L/\pi_K\O_L\subseteq\cdots
\subseteq1+\pi_L\O_L/\pi_K\O_L\] whose succesive quotients are isomorphic to $k$.  We may note that the toric rank of $A/K$ is positive and that it does not have semi-abelian reduction so that we are in the required situation to deal with Question \ref{110}.

We can describe the reduction map $A(K)\r\A_k(k)$ (which is surjective because $K$ is assumed to be complete). Let $P\in A(K)\cong E(L)\cong L^{\times}/q^{\bbZ}$. The image of $P$ in $\Phi(A)\cong\Phi(E)\cong\bbZ/n\bbZ$ is given by the class of $v_L(z)$ modulo $n$ where  $z\in L^{\times}$ is a preimage of $P$. If $P\in\A^0(\O_K)\cong\E^0(\O_L)\cong\O_L^{\times}$ then its image in $\A_k^0(k)\cong(\O_L/\pi_K\O_L)^{\times}$ is given by reduction modulo $\pi_K\O_L$. In particular, the kernel of the reduction map is isomorphic to $1+\pi_K\O_L$.

The next lemma is the analogue in our situation of \cite[Claim 4.7]{LL} which is about quotient tori.

\begin{lem}\label{order}
Let $m\in\bbN$ be a divisor of $n$. There exists a point in $\A_k(k)$ of order $m$ whose image in $\Phi(A)$ is also of order $m$ if and only if $\pi_L^n/q$ is a $m$-th power in $(\O_L/\pi_K\O_L)^{\times}$.
\end{lem}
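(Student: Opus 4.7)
The plan is to combine the explicit description of the reduction map $A(K)\r\A_k(k)$ given just above with the identifications $A(K)\cong L^{\times}/q^{\bbZ}$, $\Phi(A)\cong\bbZ/n\bbZ$ and $\ker(A(K)\r\A_k(k))\cong 1+\pi_K\O_L$. Any point $\bar P\in\A_k(k)$ whose image in $\Phi(A)$ has order $m$ lifts to some $P=[z]\in A(K)$ with $z=u\pi_L^{cn/m}$, where $u\in\O_L^{\times}$ and $\gcd(c,m)=1$. Because such a $\bar P$ already has order at least $m$, requiring that it have order exactly $m$ amounts to $m\bar P=0$ in $\A_k(k)$, i.e.\ $z^m\in q^{\bbZ}(1+\pi_K\O_L)$ in $L^{\times}$. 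Comparing $v_L$-valuations, the only admissible power of $q$ is $q^c$, so the condition becomes
\[u^m(\pi_L^n/q)^c\equiv 1\pmod{\pi_K\O_L}.\]

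For sufficiency, suppose $\pi_L^n/q\equiv v^m\pmod{\pi_K\O_L}$ for some $v\in\O_L^{\times}$. Take $z=v^{-1}\pi_L^{n/m}$, corresponding to $c=1$ and $u=v^{-1}$ above. Then $z^m/q\equiv 1\pmod{\pi_K\O_L}$ by construction, so $P=[z]\in A(K)$ reduces to a point $\bar P\in\A_k(k)$ with $m\bar P=0$; since its image in $\Phi(A)$ is $n/m\bmod n$, which has order $m$, the point $\bar P$ has order exactly $m$.

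For necessity, given $\bar P$ as in the statement the displayed congruence shows that $(\pi_L^n/q)^c$ is an $m$-th power in $(\O_L/\pi_K\O_L)^{\times}$. Since $\gcd(c,m)=1$, pick $a,b\in\bbZ$ with $ac+bm=1$ and write
\[\pi_L^n/q=\bigl((\pi_L^n/q)^c\bigr)^{a}\cdot\bigl((\pi_L^n/q)^{b}\bigr)^{m},\]
exhibiting $\pi_L^n/q$ itself as an $m$-th power. The only mildly delicate step is this Bezout manipulation, which handles the fact that an order-$m$ element of $\bbZ/n\bbZ$ is of the form $cn/m$ for arbitrary $c$ coprime to $m$ rather than just $c=1$; everything else is direct bookkeeping with valuations.
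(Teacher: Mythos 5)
Your proof is correct and follows the same core strategy as the paper's: normalize a lift $z$ to have valuation a multiple of $n/m$, translate "order exactly $m$" into $z^m \in q^{\bbZ}(1+\pi_K\O_L)$, compare valuations to pin down the power of $q$, and rearrange. The one place you diverge is in handling the fact that an element of order $m$ in $\bbZ/n\bbZ$ is $cn/m$ for arbitrary $c$ coprime to $m$: you keep $c$ around and close the gap with a Bezout identity at the end. The paper simply declares that the existence of the desired point is equivalent to the existence of $z$ with $v_L(z)\equiv n/m \bmod n$, implicitly using that $\bbZ/n\bbZ$ has a unique subgroup of order $m$ and that if one generator of it lifts to an order-$m$ point then so do all others (take the appropriate power). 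Your Bezout step is a perfectly serviceable substitute for that observation and arguably makes the reduction more explicit; the paper's version is a touch shorter but leaves this normalization unjustified. Either way the bookkeeping with valuations is the same.
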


\begin{proof}
The existence of such a point is equivalent to the existence of $z\in L^{\times}$ such that \[v_L(z)\equiv n/m \ \mod \ n\] and \[z^m\in q^{\bbZ}(1+\pi_K\O_L).\] Suppose that such $z$ exists. Multiplying $z$ by a suitable power of $q$ we may suppose that $v_L(z)=n/m$. Thus, necessarily $z^m/q\in 1+\pi_K\O_L$ (recall that $v_L(q)=n$). Then \[\pi_L^n/q=(\pi_L^{n/m}/z)^m(z^m/q)\in\O_L^m(1+\pi_K\O_L).\] Conversely, if $\pi_L^n/q\equiv y^m \ \mod \ \pi_K\O_L$ for some $y\in\O_L^{\times}$, then we can take $z=y\pi_L^{n/m}$.
\end{proof}

\begin{cor}\label{resE}
Assume that $v_L(q)=p$. Then, $A=\Res_{L/K} E/K$ has split reduction if and only if \begin{equation}q/\pi_L^p\in \O_K^{\times}((1+\pi_L\O_L)^p+\pi_K\O_L).\label{congr}\end{equation}
\end{cor}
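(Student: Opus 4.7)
My plan is to reduce the claim to the criterion of Lemma~\ref{order} and then to unpack the latter by describing explicitly the $p$-th powers in $(\O_L/\pi_K\O_L)^\times$. Since $v_L(q)=p$, we have $\Phi(A)\cong\Phi(E)\cong\bbZ/p\bbZ$ by the isomorphisms recalled at the beginning of this subsection together with Proposition~\ref{cpnt}. Because this group has prime order, the characterization of split reduction for finite $\Phi$ stated in the introduction reduces the problem to the existence of a single element of $\A_k(k)$ of order $p$ lifting a chosen generator of $\Phi(A)$. Applying Lemma~\ref{order} with $m=p$, the existence of such a lift is equivalent to $\pi_L^p/q$, and therefore also $q/\pi_L^p$, being a $p$-th power in $(\O_L/\pi_K\O_L)^\times$.

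The core of the argument is then to identify the set of $p$-th powers in $(\O_L/\pi_K\O_L)^\times$ with the image of $\O_K^\times((1+\pi_L\O_L)^p+\pi_K\O_L)$. To this end, I would use the standard decomposition: every $y\in\O_L^\times$ can be written as $y=y_0(1+\pi_Lb)$ with $y_0\in\O_K^\times$ (a lift of the common residue class $\bar{y}\in k^\times$ through $\O_K^\times\to k^\times$) and $b\in\O_L$, so that $y^p=y_0^p(1+\pi_Lb)^p$. Moreover, because $k$ is algebraically closed of characteristic $p$, Frobenius is surjective on $k^\times$; hence for every $u\in\O_K^\times$ there exists $v\in\O_K^\times$ with $v^p\equiv u\pmod{\pi_K}$, and a fortiori modulo $\pi_K\O_L$. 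Combining these two observations shows that the $p$-th powers in $(\O_L/\pi_K\O_L)^\times$ are precisely the classes of elements of the form $u(1+\pi_Lb)^p$ with $u\in\O_K^\times$ and $b\in\O_L$.

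To conclude, the congruence $q/\pi_L^p\equiv u(1+\pi_Lb)^p\pmod{\pi_K\O_L}$ translates into $q/\pi_L^p=u(1+\pi_Lb)^p+\pi_Kz$ for some $z\in\O_L$, which rewrites as
\[ q/\pi_L^p=u\bigl((1+\pi_Lb)^p+\pi_K(z/u)\bigr)\in\O_K^\times\bigl((1+\pi_L\O_L)^p+\pi_K\O_L\bigr), \]
and the converse is obtained by reversing this chain of equivalences. I do not anticipate a serious obstacle: the argument is essentially bookkeeping in $(\O_L/\pi_K\O_L)^\times$, and the only subtle point is the appeal to the algebraic closedness of $k$, used to absorb scalar factors into elements of $\O_K^\times$.
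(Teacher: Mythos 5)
Your proof is correct and follows essentially the same route as the paper: both reduce the statement to Lemma~\ref{order} with $m=p$ and then identify the $p$-th powers in $(\O_L/\pi_K\O_L)^\times$ via the splitting of the sequence (\ref{untor}) together with the surjectivity of $p$-th powers on $k^\times$. Your version merely makes the splitting $y=y_0(1+\pi_L b)$ and the resulting bookkeeping explicit where the paper simply cites the exact sequence (\ref{untor}).
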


\begin{proof}
This follows from the lemma and the exact sequence (\ref{untor}), using that the multiplication by $p$ is surjective on $k^{\times}\cong(\O_K/\pi_K\O_K)^{\times}$.
\end{proof}

\begin{rem}
In some sense, both the conditions of having split reduction and not having split reduction for $A/K$ as in Corollary \ref{resE} are open for the topology on $L$. Indeed, if we consider $q'$ close enough to $q$, i.e. such that $q'\in q(1+\pi_K\O_L)$, then $q$ satisfies condition (\ref{congr}) if and only if $q'$ satisfies condition (\ref{congr}).
\end{rem}

\begin{prop}\label{mult}
Let $L/K$ be a Galois extension. Then $A_L/L$ has purely multiplicative reduction.
\end{prop}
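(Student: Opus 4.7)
The plan is to exploit the fact that for a Galois extension $L/K$ with group $G=\Gal(L/K)$, the Weil restriction trivializes after base change to $L$ into a product indexed by $G$.

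First I would use the base change formula (\ref{bc}) from \S\ref{S21}. Setting $S=\Spec K$, $S'=\Spec L$ and $T=\Spec L$, we get $T'=\Spec(L\otimes_K L)$. Since $L/K$ is Galois, the étale $L$-algebra $L\otimes_K L$ is canonically isomorphic to $\prod_{\sigma\in G} L$, where the factor indexed by $\sigma$ corresponds to the $L$-algebra homomorphism $x\otimes y\mapsto \sigma(x)y$. Consequently $T'$ is a disjoint union of $|G|$ copies of $\Spec L$, and
\[
E\times_{\Spec L}T'\;\cong\;\coprod_{\sigma\in G}E_\sigma,
\]
where $E_\sigma=E\times_{L,\sigma}L$ denotes the base change of $E/L$ along $\sigma\colon L\to L$.

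Next, I would apply (\ref{bc}) to conclude that
\[
A_L \;=\;(\Res_{L/K}E)\times_K L\;\stackrel{\simeq}{\longrightarrow}\;\Res_{T'/T}\Bigl(\coprod_{\sigma\in G}E_\sigma\Bigr)\;\cong\;\prod_{\sigma\in G}E_\sigma,
\]
using that Weil restriction along a disjoint union of copies of the base is simply the product of the factors. Each $E_\sigma/L$ is again a Tate curve: it is the Tate curve associated to the parameter $\sigma(q)\in L^{\times}$, and $v_L(\sigma(q))=v_L(q)=n>0$ because $L/K$ is Galois and $v_L$ is $G$-invariant. Hence $E_\sigma/L$ has multiplicative reduction, with identity component of its Néron model isomorphic to $\bbG_{m,L}$.

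Finally, since the Néron model of a finite product of semi-abelian varieties is the product of their Néron models, and taking identity components commutes with finite products, the identity component of the Néron model of $A_L/L$ is isomorphic to $\bbG_{m,L}^{d}$ with $d=[L:K]$. This is precisely what it means for $A_L/L$ to have purely multiplicative reduction. There is no real obstacle here; the only point requiring a little care is the identification of $L\otimes_K L$ via the Galois action, which is standard.
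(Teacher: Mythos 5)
Your proof is correct and shares the paper's central structural insight — that after base change to $L$ the Weil restriction $A_L$ decomposes as a product $\prod_{\sigma\in\Gal(L/K)}{}^{\sigma}E$ of the Galois conjugates of $E$, so it suffices to see that each conjugate has multiplicative reduction. Where you diverge is in establishing that last point. The paper argues via the $\ell$-adic Galois representation: the $\Gamma_L$-action on $T_\ell({}^{\sigma}E)$ is conjugate to that on $T_\ell(E)$, hence again unipotent and non-trivial, which characterizes multiplicative reduction. You instead observe directly that ${}^{\sigma}E$ is the Tate curve with parameter $\sigma(q)$ and that $v_L(\sigma(q))=v_L(q)>0$ since $v_L$ is the unique extension of $v_K$ to $L$ and hence Galois-invariant. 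Your route is more concrete and elementary, relying only on Tate's explicit theory rather than on the monodromy criterion; the paper's route is marginally more robust (it would transfer verbatim to higher-dimensional abelian varieties with semi-abelian reduction, where one cannot speak of a single Tate parameter). You are also slightly more careful than the paper in spelling out the final reduction step — that Néron models and identity components commute with finite products — which the paper leaves implicit. The only minor imprecision in your write-up is the exact labelling of the conjugate (whether the $\sigma$-factor is ${}^{\sigma}E$ or ${}^{\sigma^{-1}}E$, which depends on which projection of $\Spec(L\otimes_K L)$ one tracks), but as $\sigma\mapsto\sigma^{-1}$ permutes $G$ this has no bearing on the product or its reduction type.
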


\begin{proof} Recall that $E/L$ has multiplicative reduction. Hence, the action of the absolute Galois group $\Gamma_L=\Gal(K^s/L)$ on the Tate module $T_{\ell}(E)$, $\ell\neq p$, is unipotent and non-trivial. Now, we have an isomorphism \[A_L\cong\prod_{\sigma\in \Gal(L/K)}{}^{\sigma}E.\] Let $\sigma\in \Gal(L/K)$. The action of $\Gamma_L$ on $T_{\ell}({}^{\sigma}E)$ is conjugated to its action on $T_{\ell}(E)$ and thus is unipotent and non-trivial. This implies that ${}^{\sigma}E/L$ has multiplicative reduction which proves the proposition.
\end{proof}

\subsection{Counterexample to Question \ref{110}}\label{ctrex} For any tamely ramified (hence Galois) extension $L/K$, one can choose $q\in L^{\times}$ with $v_L(q)=p$ such that condition (\ref{congr}) is not satisfied (for instance take $q=\pi_L^p(1+\pi_L)$). This way, we get tamely ramified abelian varieties (by Proposition \ref{mult}) which do not have split reduction (by Corollary \ref{resE}) and this gives a negative answer to Question \ref{110}.

\begin{rem}
Using Lemma \ref{order}, one can construct abelian varieties with a specified subgroup of $\Phi(A)$ lifting into $\A_k(k)$. More precisely, let us fix two non-negative integers $m\leq n$. Then, let us take $q\in L^{\times}$ such that $v_L(q)=p^n$ so that we have $\Phi(A)\cong\bbZ/ p^n\bbZ$. Now, if one choose $q\in L^{\times}$ such that \[q/\pi_L^{p^n}\in \O_K^{\times}((1+\pi_L\O_L)^{p^m}+\pi_K\O_L)\] but \[q/\pi_L^{p^n}\notin\O_K^{\times}((1+\pi_L\O_L)^{p^{m+1}}+\pi_K\O_L)\] then Lemma \ref{order} implies that every element of order $p^m$ lifts into $\A_k(k)$ but no element of order $p^{m+1}$ lifts. To make it possible we need $\O_K^{\times}((1+t\O_L)^{p^{m+1}}+\m\O_L)$ to be a proper subgroup of $\O_K^{\times}((1+t\O_L)^{p^m}+\m\O_L)$. But for a non-negative integer $\ell$ we have \[(1+\pi_L\O_L)^{p^{\ell}}+\pi_K\O_L\subseteq 1+\pi_L^{p^{\ell}}\O_L+\pi_L^d\O_L\] so this is the case if $L/K$ is sufficiently ramified such that $d>p^m$.
\end{rem}

\begin{rem}
Let $A/K$ be an abelian variety with non-Archimedean uniformization \[0\r \Lambda\r G \r A \r 0.\] By \cite[Proposition 6.1 (a)]{LL}, if $A/K$ has split reduction then $G/K$ has split reduction. We can use our construction to show that the converse is not true. The non-Archimedean uniformization of the Tate curve $E/L$ associated to $q\in L^{\times}$ is given by the exact sequence of rigid analytic groups \[0\r q^{\bbZ}\r\bbG_{m,L}\r E\r0.\] Then, the non-Archimedean uniformization of $A=\Res_{L/K}E/K$ is given by \[0\r\Res_{L/K}q^{\bbZ}\r\Res_{L/K}\bbG_{m,L}\r\Res_{L/K}E\r0.\] It follows from Proposition \ref{cpnt} that \[\Phi(\Res_{L/K}\bbG_{m,L})\cong\Phi(\bbG_{m,L}),\] so in particular $\Phi(\Res_{L/K}\bbG_{m,L})$ is free. This implies that $\Res_{L/K}\bbG_{m,L}$ has split reduction but as we saw $A/K$ may not have split reduction.
\end{rem}

\begin{rem}
By \cite[Proposition 1.11]{LL}, if $G/K$ and $G'/K$ are semi-abelian varieties and $f:G\r G'$ is an isogeny of degree prime to $p$ then $G/K$ has split reduction if and only if $G'/K$ has split reduction. We can use our construction to give an explicit example of two isogenous abelian varieties, one which have split reduction and the other one which does not have split reduction. This shows that \cite[Proposition 1.11]{LL} is not true for isogenies of degree divisible by $p$.

Let $p>2$. Let $K=\bbQ_p^{\mathrm{ur}}$ be the maximal unramified extension of the field of $p$-adic numbers and let $L=\bbQ_p^{\mathrm{ur}}(\zeta_p)$ where $\zeta_p$ is a $p$-th root of $1$. We have $d=\left[L:K\right]=p-1$. Let $q=\pi_L^p$ and $q'=\zeta_pq$ and let $E/L$ and $E'/L$ be the Tate curves associated to $q$ and $q'$. These two elliptic curves are isogenous to the Tate curve over $L$ associated to $q^p=q'^p$ by raising to the power $p$ \[\bbG_{m,L}/q^{\bbZ}\stackrel{(\ )^p}{\longrightarrow}\bbG_{m,L}/q^{p\bbZ}\stackrel{(\ )^p}{\longleftarrow}\bbG_{m,L}/q'^{\bbZ}.\] Therefore $E/L$ and $E'/L$ are isogenous. As the Weil restriction of an isogeny is an isogeny, $\Res_{L/K}E/K$ and $\Res_{L/K}E'/K$ are isogenous. But now \[q/\pi_L^p=1\in(1+\pi_L\O_L)^p+\pi_K\O_L\] whereas \[q'/\pi_L^p=\zeta_p\notin(1+\pi_L\O_L)^p+\pi_K\O_L\] because $v_L(\zeta_p-1)=v_L(p)/(p-1)=1$ but $(1+\pi_L\O_L)^p+\pi_K\O_L=1+\pi_L^{p-1}\O_L$. Hence, by Corollary \ref{resE}, $\Res_{L/K}E/K$ has split reduction but $\Res_{L/K}E'/K$ does not have split reduction.
\end{rem}

\section{Splitting properties and the Weil restriction}\label{III}

Our construction in Section \ref{II} leads to the question of the relation between the Weil restriction and splitting properties of semi-abelian varieties. A first answer is given in \cite[Remark 3.10]{LL}. The authors give an example of an elliptic curve which have split reduction but whose Weil restriction does not have split reduction. Using Corollary \ref{resE} we get other examples of this kind since Tate curves have semi-abelian reduction and thus have split reduction. In this section we want to study the general situation.

\subsection{Reduction of Weil restrictions}\label{inffib} Let $K$ be a complete discrete valuation field with ring of integers $\O_K$ and uniformizing element $\pi_K$. Let $G/K$ be a semi-abelian variety with N\'eron model $\G/\O_K$. For any positive integer $n$ we denote by \[\G_n=\G\times_{ \O_K}\O_K/\pi_K^n\O_K\] the infinitesimal fiber, by \[r_n:G(K)\r\G_n(\O_K/\pi_K^n\O_K)\] the reduction map and by $G^n(K)$ the kernel of this reduction map.

\begin{prop}\label{weil}Let $L/K$ be a finite separable extension and let $G/L$ be a semi-abelian variety. We have the following implications :
\begin{enumerate}[(1)]
\item if $\Res_{L/K}G/K$ has split reduction then $G/L$ has split reduction;
\item if $G/L$ has totally not split reduction then $\Res_{L/K}G/K$ has totally not split reduction.
\end{enumerate}
\end{prop}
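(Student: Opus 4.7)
The strategy is to exhibit a natural morphism of component group short exact sequences and deduce both implications from it by diagram chasing. By \S\ref{S22} and Proposition \ref{cpnt}, $\A:=\Res_{\O_L/\O_K}\G$ is the N\'eron model of $A:=\Res_{L/K}G$ and its identity component is $\A^0=\Res_{\O_L/\O_K}\G^0$. The Weil restriction comes with its adjunction map $\A\times_{\O_K}\O_L\r\G$; base-changing through the residue map $\O_L\r k$ (trivial residue extension, since $k$ is algebraically closed) gives a $k$-group-scheme morphism $\rho\colon\A_k\r\G_k$ that restricts to $\A^0_k\r\G^0_k$, and on $k$-points identifies with the obvious reduction $\G(\O_L/\pi_K\O_L)\r\G(\O_L/\pi_L\O_L)=\G_k(k)$.

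Next, I would check that $\rho$ realizes the isomorphism of Proposition \ref{cpnt} at the level of component groups, yielding the commutative diagram
\[
\begin{array}{ccccccccc}
0 & \r & \A^0_k(k) & \r & \A_k(k) & \r & \Phi(A) & \r & 0 \\
& & \downarrow & & \downarrow\rho & & \downarrow\simeq & & \\
0 & \r & \G^0_k(k) & \r & \G_k(k) & \r & \Phi(G) & \r & 0.
\end{array}
\]
This reduces to unwinding the definitions: given $P\in\A_k(k)$, smoothness of $\A$ and completeness of $K$ provide a lift $\tilde P\in\A(\O_K)=\G(\O_L)$, and the Proposition \ref{cpnt} isomorphism sends the class of $P$ in $\Phi(A)$ to the class of $\tilde P\bmod\pi_L$ in $\G_k(k)/\G^0_k(k)$, which is exactly the class of $\rho(P)$.

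Granting the diagram, both implications follow formally. For (1), since $\Phi(A)$ is \'etale over the algebraically closed field $k$, a splitting of the upper row as algebraic groups amounts to a group-theoretic section $s\colon\Phi(A)\r\A_k(k)$; composing with $\rho$ and the right vertical isomorphism yields a group-theoretic section of $\G_k(k)\r\Phi(G)$, so $G/L$ has split reduction. For (2), suppose for contradiction that some $\phi\in\Phi(A)$ of order $p$ lifts to some $x\in\A_k(k)$ of order $p$; its image $\phi'\in\Phi(G)$ still has order $p$, and $\rho(x)^p=1$ with $\rho(x)$ lifting $\phi'\neq 0$, so $\rho(x)$ has order exactly $p$, contradicting the totally not split hypothesis on $G/L$.

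The substantive step is the identification of the map induced by $\rho$ on component groups with the isomorphism of Proposition \ref{cpnt}; once this compatibility is established, the rest is a routine diagram chase.
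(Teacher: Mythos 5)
Your proof is correct and takes essentially the same approach as the paper: both hinge on the Weil restriction adjunction and the compatibility with the isomorphism of Proposition~\ref{cpnt}. The paper phrases the key fact as the inclusion of reduction kernels $(\Res_{L/K}G)^1(K)=G^d(L)\subseteq G^1(L)$ and chases at the level of $K$-rational points, whereas you make the underlying $k$-group-scheme morphism $\rho\colon\A_k\r\G_k$ explicit and chase the resulting morphism of short exact sequences, which is the same argument in a slightly more structural dress.
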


\begin{proof}
Let us compute $(\Res_{L/K}G)^1(K)$. Using the base change formula (\ref{bc}) we find that this is the kernel of the reduction map \[(\Res_{L/K}G)(K)\r(\Res_{(\O_L/\pi_K\O_L)/k}(\G\times_{\O_L}\O_L/\pi_K\O_L))(k),\] i.e. of the reduction map \[r_d:G(L)\r\G_d(\O_L/\pi_L^d\O_L)\] where $d=\left[L:K\right]$. Hence, we have $(\Res_{L/K}G)^1(K)=G^d(L)\subseteq G^1(L)$.

Let us show assertion (1). Let us suppose that $\Res_{L/K}G/K$ has split reduction. Let $\phi\in\Phi(G)\cong\Phi(\Res_{L/K}G)$ (by Proposition \ref{cpnt}) be an element of finite order $n$. Then $\phi$ lifts to an element $x\in (\Res_{L/K}G)(K)=G(L)$ such that $n\cdot x\in(\Res_{L/K}G)^1(K)\subseteq G^1(L)$. Thus, $G/L$ has split reduction.

Assertion (2) follows from a similar argument.
\end{proof}

In the remaining part of this section we will give a recipe to build counterexamples to the reciprocal of assertions (1) and (2) based on the case by case study in \cite[Section 2]{LL}.

\subsection{Reduction of elliptic curves} Let $L$ be a complete discrete valuation field with valuation $v_L$, ring of integers $\O_L$ and uniformizing element $\pi_L$. Let $E/L$ be an elliptic curve. Let $\E/\O_L$ be its N\'eron model. Assume that $E/L$ is given by a minimal Weierstrass equation \[y^2+a_1xy+a_3x=x^3+a_2x^2+a_4x+a_6,\] with $a_i\in\O_L$ for $i\in\left\{1,2,3,4,6\right\}$. When $a_i\in\pi_L\O_L$ for all $i\in\left\{1,2,3,4,6\right\}$ then the reduced equation has a singular point at $(0,0)$. Let $E^0(L)$ denote the set of rational points in $E(L)$ whose reduction modulo $\pi_L$ is not $(0,0)$. Equivalently $E^0(L)\cong\E^0(\O_L)$ under the isomorphism $E(L)\cong\E(\O_L)$ (see \cite[Corollary IV.9.2]{Si2}) and thus we have \[\Phi(E)\cong E(L)/E^0(L).\] For any positive integer $n$ the subgroup $E^n(L)\subseteq E^0(L)$ defined in \S\ref{inffib} is given by \[E^n(L)=\left\{P\in E(L) \ | \ x(P)/y(P)\in\pi_L^n\O_L\right\}.\] We will denote by $z=-x/y$ the parameter at $\infty$.

\subsection{A key point}\label{ordred} Let us recall an important fact from \cite[Section 2]{LL}. Assume that $E/L$ has additive reduction. Let $P,Q\in E(L)$ be two points whose reductions in $\E_k(k)$ are lying in the same non-trivial component. Then, the orders of those reductions in $\E_k(k)$ are equal. This is only due to the fact that the identity component of the special fiber is killed by $p$ and so it applies more generally in this context. In particular, given a field $K$ as in the introduction such that $L$ is a finite extension of $K$, it applies to $\Res_{L/K}E/K$. Indeed, \[(\Res_{\O_L/\O_K}\E)^0_k\cong\Res_{(\O_L/\pi_K\O_L)/k}\bbG_{a,(\O_L/\pi_K\O_L)}\] by Proposition \ref{cpnt} and the base change formula (\ref{bc}) and thus \[(\Res_{\O_L/\O_K}\E)^0_k(k)\cong\O_L/\pi_K\O_L\] which is killed by $p$. Note that $(\Res_{\O_L/\O_K}\E)^0_k$ is a unipotent algebraic group over $k$.

\subsection{First example} Let us follow \cite[\S 2.13]{LL}. Let $p=3$. Let $E/L$ be of type $\mathbf{IV}$. Then, we have $\Phi(E)\cong\bbZ/3\bbZ$. It is shown that we may assume $a_1=a_3=0$ and that we have the following inequalities for the valuations of the coefficients 
\[\begin{array}{|c|c|c|c|c|c|c|}\hline a_2 & a_4 & a_6 & b_2 & b_4 & b_6 & b_8 \\ \hline \geq 1 & \geq 2 & 2 & \geq 1 & \geq 2 & 2 & \geq 3 \\ \hline \end{array}\] with the relations $b_2=4a_2$, $b_4=2a_4$, $b_6=4a_6$ and $b_8=4a_2a_6-a_4^2$. As we mentioned in \S\ref{ordred}, to study the splitting properties of $E/L$ or $\Res_{L/K}E/K$ it is enough to consider one point with non-trivial image in $\Phi(E)\cong\Phi(\Res_{L/K}E)$. Let $P=(0,y(P))$ with $y(P)^2=a_6$ which is clearly not in $E^0(L)$. Assume that $P$ is not of $3$-torsion. We use the formuli in \cite[III.2.3]{Si1} to compute $v_L(z(3P))$. 

We have \[\begin{array}{l}x(2P)=\displaystyle-\frac{b_8}{b_6},\\y(2P)=\displaystyle-\frac{a_4x(2P)}{2y(P)}-\frac{a_6}{y(P)},\\x(3P)=\displaystyle\left(\frac{y(2P)-y(P)}{x(2P)}\right)^2-a_2-x(2P),\\y(3P)=\displaystyle-\left(\frac{y(2P)-y(P)}{x(2P)}\right)x(3P)-y(P).\end{array}\] Thus,\[x(3P)=\frac{1}{x(2P)^2}\left(\frac{a_4x(2P)+4a_6}{2y(P)}\right)^2-a_2-x(2P)\] and by considering the valuations we get \[v_L(x(3P))=6-2v_L(b_8)\leq0.\] Then, we have \[y(3P)=\frac{1}{x(2P)}\left(\frac{a_4x(2P)+4a_6}{2y(P)}\right)x(3P)-y(P)\] and by considering the valuations we have \[v_L(y(3P))=3-v_L(b_8)+v_L(x(3P)).\] Hence, we find that \[v_L(z(3P))=v_L(b_8)-3\geq0.\] 

By definition $E/L$ does not have split reduction if and only if $3P\notin E^1(L)$. We recover here that this is equivalent to $v_L(b_8)=3$. Set $m=v_L(b_8)-3$. If $m>0$ then $E/L$ has split reduction but for a subextension $L/K$ of degree $d>m$, if it exists, $3P\notin E^d(L)=(\Res_{L/K}E)^1(L)$, i.e. $\Res_{L/K}E$ does not have split reduction. Whence, the reciprocal to Proposition \ref{weil} (1) is false.

\subsection{Second example} Let us follow \cite[\S 2.6]{LL}. Let $p=2$. Let $E/L$ be of type $\mathbf{I}_0^{\ast}$. We have $\Phi(E)\cong\bbZ/2\bbZ\times\bbZ/2\bbZ$. It is shown that we can find three points \[P_i=(\pi_L\alpha_i,0)\in E(L),\] with $\alpha_i\in\O_L$ for $i\in\left\{1,2,3\right\}$ whose reductions modulo $\pi_L$ are distinct, such that their images in $\Phi(E)$ are the three disctinct non-trivial points. More precisely, we have \[x^3+a_2x^2+a_4x+a_6=(x-\pi_L\alpha_1)(x-\pi_L\alpha_2)(x-\pi_L\alpha_3).\] We have the following inequalities for the valuations of the coefficients \[\begin{array}{|c|c|c|c|c|c|c|c|c|}\hline a_1 & a_2 & a_3 & a_4 & a_6 & b_2 & b_4 & b_6 & b_8 \\ \hline \geq 1 & \geq 1 & \geq 2 & \geq 2 & \geq 3 & \geq 2 & \geq 3 & \geq 4 & \geq 4 \\ \hline \end{array}\] We will compute $v_L(z(2P_i))$ for $i\in\left\{1,2,3\right\}$ and $P_i$'s that are not of $2$-torsion.

Let us write $x_i=\pi_L\alpha_i$ for $i\in\left\{1,2,3\right\}$. Without loss of generality we may do the computation taking $i=1$. Recall that we have \[x(2P)=\frac{x^4-b_4x^2-2b_6x-b_8}{4x^3+b_2x^2+2b_4x+b_6}.\] Moreover $x^4-b_4x^2-2b_6x-b_8$ is congruent to $(x^2-a_4)^2$ modulo $\pi^4$ and $a_4=\pi^2(\alpha_1\alpha_2+\alpha_1\alpha_3+\alpha_2\alpha_3)$. Now $x_1^2-a_4=\pi^2(\alpha_1+\alpha_2)(\alpha_1+\alpha_3)$ and thus the valuation of the numerator of $x(2P_1)$ is $4$. As in \cite[\S 2.6]{LL}, the valuation of the denominator of $x(2P_1)$ is $2v_L(a_1x_1+a_3)$ and so we have \[v_L(x(2P_1))=4-2v_L(a_1x_1+a_3)\leq0.\] Then, we have \[y(2P_1)=-(\lambda+a_1) x(2P_1) -\nu - a_3\] where \[\lambda = \frac{3x_1^2+2a_2x_1+a_4}{a_1x_1+a_3}\] and \[\nu=\frac{-x_1^3+a_4x_1+2a_6}{a_1x_1+a_3}.\] Now, we get \begin{multline*}-(a_1x_1+a_3)y(2P_1)=(3x_1^2+2a_2x_1+a_4+a_1(a_1x_1+a_3))x(2P_1)\\-x_1^3+a_4x_1+2a_6+a_3(a_1x_1+a_3).\end{multline*} Using that $3x^2+a_4$ have valuation $4$ we find that \[v_L(y(2P))=2+v_L(x(2P))-v_L(a_1x+a_3).\] Hence, for $i\in\left\{1,2,3\right\}$ we have \[v_L(z(2P_i))=v_L(a_1x_i+a_3)-2\geq0.\] 

Let $m_i=v_L(a_1x_i+a_3)-2$ for $i\in\left\{1,2,3\right\}$. Assume that $E$ does not have totally not split reduction so that $m_i>0$ for some $i$. Then, for a subextension $L/K$ of degree sufficiently large, $\Res_{L/K}E$ has totally not split reduction. Whence, the reciprocal to Proposition \ref{weil} (2) is false.

\section{Split reduction of Jacobian varieties after tamely ramified extensions}\label{IV}

Let $K$ be a complete discrete valuation field with ring of integers $\O_K$ and residue field $k$ of characterisitc exponent $p$. Let $K^s$ be a fixed separable closure of $K$. Let $G/K$ be a semi-abelian variety of dimension $g$. Let $\G/\O_K$ be its N\'eron model with special fiber $\G_k/k$. 

\subsection{Edixhoven's filtration} In the case where $G/K$ is abelian, Edixhoven defined a filtration on $\G_k/k$ by closed subgroups (see \cite{Ed}). It was extended to semi-abelian varieties by Halle and Nicaise in \cite{HN3}. Let us recall the construction of this filtration, following \cite[\S 5.1.3]{HN1}. For every positive integer $d$ prime to $p$, let $K(d)/K$ be the unique degree $d$ extension of $K$ in $K^s$ and let $\O_{K(d)}$ be the ring of integers of $K(d)$. Since $K(d)/K$ is tamely ramified, it is Galois. Let us denote by $\Gamma$ the Galois group $\Gal(K(d)/K)$ and by $\G(d)/\O_{K(d)}$ the N\'eron model of $G_{K(d)}=G\times_{ K} K(d)$. Let \[\W=\Res_{\O_{K(d)}/\O_K}\G(d)\] be the Weil restriction of $\G(d)/\O_{K(d)}$ under the extension $\O_{K(d)}/\O_K$. Then, $\Gamma$ acts on $\W/\O_K$ and the fixed locus $\W^{\Gamma}/\O_K$ is canonically isomorphic to $\G/\O_K$ (see \cite[Theorem 4.2]{Ed} or \cite[Proposition 4.1]{HN3}). Let $\pi_{K(d)}$ be a uniformizing element of $K(d)$. For every $i\in\left\{0,\ldots,d\right\}$, the reduction modulo $\pi_{K(d)}^i$ defines a morphism of group schemes \[(\W^{\Gamma})_k=(\W_k)^{\Gamma}\r \Res_{(\O_{K(d)}/\pi_{K(d)}^i\O_{K(d)})/k}(\G(d)\times_{ \O_{K(d)}} \O_{K(d)}/\pi_{K(d)}^i\O_{K(d)})\] whose kernel is denoted by $F^i_d\G_k$. We get a filtration \[\G_k=F^0_d\G_k\supset F^1_d\G_k\supset \cdots \supset F^d_d\G_k = 0\] on $\G_k/k$ by closed subgroups, and $F^i_d\G_k/k$ is a smooth and connected unipotent algebraic group for all $i>0$. Let us denote by \[\Gr^i_d\G_k=F^i_d\G_k/F^{i+1}_d\G_k\] the graded quotients of this filtration. We say that $j\in\left\{0,\ldots,d-1\right\}$ is a \emph{$K(d)$-jump of $G/K$} if $\dim (\Gr^j_d\G_k) >0$ and we call this dimension the \emph{multiplicity of $j$}.

Edixhoven also introduced a filtration on $\G_k/k$ by rational indices that captures the filtrations introduced above simultaneously for all $d$. For every rational number $\alpha=a/b$ in $\bbZ_{(p)}\cap\left[0,1\right[$, with $a,b$ nonnegative integers and $b$ prime to $p$, we put \[\F^{\alpha}\G_k=F^a_b\G_k.\]  By \cite[Lemma 4.11]{HN3}, this definition does not depend on the choice of $a$ and $b$ and we get a filtration $\F^{\bullet}\G_k$ of $\G_k/k$ by closed subgroups. Note that there are only finitely many closed subgroups occuring in the filtration $\F^{\bullet}\G_k$ because $\G_k/k$ is Noetherian. Let $\rho$ be an element of $\bbR\cap\left[0,1\right[$. Set $\F^{>\rho}\G_k=\F^{\beta}\G_k$, where $\beta$ is any value in $\bbZ_{(p)}\cap\left]\rho,1\right[$ such that $\F^{\beta'}\G_k=\F^{\beta}\G_k$ for all $\beta'\in\bbZ_{(p)}\cap\left]\rho,\beta\right]$. If $\rho\neq0$, set $\F^{<\rho}\G_k=\F^{\gamma}\G_k$, where $\gamma$ is any value in $\bbZ_{(p)}\cap\left[0,\rho\right[$ such that $\F^{\gamma'}\G_k=\F^{\gamma}\G_k$ for all $\gamma'\in\bbZ_{(p)}\cap\left[\gamma,\rho\right[$. Set $\F^{<0}\G_k=\G_k$. Then, let us define \[\Gr^{\rho}\G_k=\F^{<\rho}\G_k/\F^{>\rho}\G_k\] for every $\rho\in\bbR\cap\left[0,1\right[$. We say that $j\in\bbR\cap\left[0,1\right[$ is a \emph{jump of $G$/K} if $\dim(\Gr^j\G_k)>0$ and we call this dimension the \emph{multiplicity of $j$}. Counted with multiplicities, $G/K$ has exactly $g$ jumps.

\subsection{The case of Jacobian varieties}\label{S42} Let $C/K$ be a smooth, proper and geometrically connected curve of genus $g>0$. Assume that $C/K$ has a divisor of degree one. Define the \emph{stabilization index} of $C/K$ (see \cite[Definition 3.2.2.3]{HN1}) as the least common multiple of the multiplicities of the principal components (i.e. components of positive genus or intersecting the other components at at least three points) in the special fiber of the minimal model with strict normal crossings of $C$ over $\O_K$. Let us denote the stabilization index of $C/K$ by $e(C/K)$. Let $L$ be the minimal finite separable extension of $K$ in $K^s$ such that $C\times_K L$ has semistable reduction. If this extension is tamely ramified then $e(C/K)=\left[L:K\right]$ (see \cite[Proposition 3.2.2.4]{HN1}) but this is false in general.

\begin{prop} \label{propjac}
Let $J=\Jac(C)/K$ be the Jacobian variety of $C/K$ and let $\J/\O_K$ be its N\'eron model. Let $U/k$ be the unipotent part of $\J_k^0/k$. Let $d$ be a positive integer prime to $p$. If $d>e(C/K)$ then $U/k$ is the kernel of the canonical morphism \[\J_k\r\J(d)_k.\]
\end{prop}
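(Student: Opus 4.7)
The plan is to identify the kernel of $\J_k\r\J(d)_k$ with a piece of Edixhoven's filtration and then to use a bound on the jumps of a Jacobian coming from the geometry of its stable model. By the very construction of the filtration (taking $i=1$), the kernel of $\J_k\r\J(d)_k$ is exactly $F^1_d\J_k=\F^{1/d}\J_k$. Since $F^1_d\J_k$ is smooth, connected, and unipotent, it is automatically contained in the unipotent part $U$ of $\J_k^0$, so the real content of the proposition is the reverse inclusion $U\subseteq\F^{1/d}\J_k$. I would deduce it by identifying $U$ with $\F^{>0}\J_k$ and then showing that the hypothesis $d>e(C/K)$ forces $\F^{1/d}\J_k=\F^{>0}\J_k$.

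For the identification $U=\F^{>0}\J_k$, note that $\F^{>0}\J_k$ is connected and unipotent, so $\F^{>0}\J_k\subseteq U$; the reverse inclusion follows by a dimension count. The total multiplicity of the jumps of $\J/K$ is $g=\dim J$, and the multiplicity at the jump $0$ equals $t+a$, the toric plus abelian rank of $\J_k^0/k$. This last fact is a standard feature of the filtration: $\Gr^0\J_k$ is the \emph{tame} quotient of $\J_k$, i.e.\ the extension of the component group $\Phi(J)$ by the semi-abelian quotient $\J_k^0/U$. Summing multiplicities of the positive jumps then gives $\dim \F^{>0}\J_k = g-(t+a) = \dim U$, so $\F^{>0}\J_k = U$.

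It therefore suffices to show that the filtration $\F^\bullet\J_k$ has no jump in the interval $\left]0,1/d\right]$. Here I would invoke the Halle--Nicaise description of the jumps of a Jacobian in terms of the minimal SNC model of $C/K$: every positive jump of $\J/K$ is a rational number whose denominator divides the stabilization index $e(C/K)$. In particular the smallest positive jump is at least $1/e(C/K)$. The hypothesis $d>e(C/K)$ then gives $1/d<1/e(C/K)$, so $\left]0,1/d\right]$ contains no jump of $\J/K$, and we conclude $\F^{1/d}\J_k=\F^{>0}\J_k=U$.

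The main obstacle is isolating and invoking the correct form of this Halle--Nicaise bound on the denominators of the jumps of Jacobians, which itself rests on the explicit computation of Edixhoven's filtration for $\J/K$ in terms of the geometry of the minimal SNC model. Granted this geometric input, the rest of the argument is a routine manipulation of the filtration.
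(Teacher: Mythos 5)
Your overall route is the same as the paper's: identify the kernel of $\J_k\to\J(d)_k$ with $F^1_d\J_k=\F^{1/d}\J_k$, show $\F^{>0}\J_k=U$, and then use a lower bound on the smallest positive jump in terms of the stabilization index to conclude $\F^{1/d}\J_k=\F^{>0}\J_k$. The last two steps match the paper word for word in spirit, citing the Halle--Nicaise statement that the jumps of a Jacobian are rational with common denominator $e(C/K)$.

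The gap is in your justification of the key identification $\F^{>0}\J_k=U$. You reduce it to the claim that $\Gr^0\J_k$ is ``the tame quotient of $\J_k$, i.e.\ the extension of $\Phi(J)$ by $\J_k^0/U$,'' and assert this is ``a standard feature of the filtration.'' It is not, and as stated the step is circular: by definition $\Gr^0\J_k=\F^{<0}\J_k/\F^{>0}\J_k=\J_k/\F^{>0}\J_k$, so the assertion that the kernel of $\J_k\to\Gr^0\J_k$ equals $U$ \emph{is} the equality $\F^{>0}\J_k=U$ that you are trying to prove. Moreover this equality fails for general wildly ramified abelian varieties: the filtration $\F^{\bullet}$ is built only from the tame extensions $K(d)$ and cannot detect unipotence created by wild ramification, so $\F^{>0}\G_k$ can be a proper subgroup of $U$. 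That it does hold for Jacobians is a nontrivial theorem — precisely \cite[Proposition 5.4.3]{EHN}, which asserts that the unipotent rank $u=\dim U$ equals the number of nonzero jumps counted with multiplicity. The paper's proof cites exactly this, and the remark following Theorem \ref{thm} stresses that the Jacobian hypothesis is needed in the current state of the theory precisely to invoke it. You should replace the ``standard feature'' appeal with a direct citation of \cite[Proposition 5.4.3]{EHN} (or an actual proof via the logarithmic interpretation it provides); with that in place your argument is complete and matches the paper's.
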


\begin{proof}
Let $u$ be the dimension of $U$. By \cite[Proposition 5.4.3]{EHN}, $u$ is equal to the number of nonzero jumps of $J/K$ counted with multiplicities. Hence $\F^{>0}\J_k$ is a connected unipotent subgroup of $\J_k$ of dimension $u$. This implies that $\F^{>0}\J_k=U$. By \cite[Corollary 5.3.1.5]{HN1}, the jumps of $J/K$ are rational numbers and the stabilization index $e(C/K)$ is their least common denominator. This implies that the smallest nonzero jump of $J/K$ is $\geq 1/e(C/K)$. Then, $1/d<1/e(C/K)$ implies that $\F^{>0}\J_k=\F^{1/d}\J_k$. Thus, we get \[U=\F^{>0}\J_k=\F^{1/d}\J_k=F^1_d\J_k.\] Now, the fact that $U/k$ is the kernel of the canonical morphism \[\A_k\r\A(d)_k\] follows from the definition of the filtration $F^{\bullet}_d\J_k$ (see \cite[Remark 6.4.6]{Ed}).
\end{proof}

\begin{thm} \label{thm}
Let $J=\Jac(C)/K$ be the Jacobian variety of $C/K$. Let $d$ be a positive integer prime to $p$. If $d>e(C/K)$, then $J_{K(d)}/K(d)$ has split reduction.
\end{thm}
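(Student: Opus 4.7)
The plan is to establish that $\J(d)_k^0$ is semi-abelian, from which the split reduction of $J_{K(d)}/K(d)$ follows by Proposition~1.6 of \cite{LL} (as discussed in \S\ref{S15}).

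By Proposition~\ref{propjac}, since $d>e(C/K)$ and $\gcd(d,p)=1$, the unipotent part $U$ of $\J_k^0$ equals the kernel of the canonical morphism $\phi\colon\J_k\to\J(d)_k$. Consequently $\J_k/U$ embeds into $\J(d)_k$, and in particular the semi-abelian variety $\J_k^0/U$ embeds into $\J(d)_k^0$ as a closed subgroup of dimension $g-\dim U$.

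To conclude that the unipotent part $U_d$ of $\J(d)_k^0$ vanishes, I would apply Proposition~\ref{propjac} two further times and then carry out a diagram chase. First, applied to $C_{K(d)}/K(d)$ with a tame extension of $K(d)$ of degree $d$: here $e(C_{K(d)}/K(d))\le e(C/K)<d$, and since the residue field of $K$ is algebraically closed one has $K(d)(d)=K(d^2)$, yielding $U_d=\ker(\J(d)_k\to\J(d^2)_k)$. Second, applied to $C/K$ with the extension $K(d^2)/K$ (which is permissible since $d^2>e(C/K)$ and $\gcd(d^2,p)=1$), yielding $U=\ker(\J_k\to\J(d^2)_k)$. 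The natural factorization $\J_k\to\J(d)_k\to\J(d^2)_k$, whose composition has kernel $U$ and whose first arrow has kernel $U$ as well, forces $U_d\cap\phi(\J_k)=0$ inside $\J(d)_k$. Combining this with the dimension formula of \cite[Proposition~5.4.3]{EHN} (identifying the unipotent rank with the number of nonzero jumps of the relevant Edixhoven filtration, counted with multiplicities) and with the rationality of the jumps of $J/K$ (whose denominators divide $e(C/K)$, hence are small compared to $d$), one is led to $U_d=0$, i.e.\ $\J(d)_k^0$ is semi-abelian.

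Once $\J(d)_k^0$ is known to be semi-abelian, Proposition~1.6 of \cite{LL} provides the sought-for split reduction of $J_{K(d)}/K(d)$. The main obstacle will be the final step of the chase: the condition $U_d\cap\phi(\J_k)=0$ only yields the inequality $\dim U_d\le\dim U$, and squeezing down to $U_d=0$ requires leveraging the arithmetic of the jumps of $J/K$ together with their transformation under the tame base change $K(d)/K$.
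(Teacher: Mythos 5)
Your plan rests on showing that $\J(d)_k^0$ is semi-abelian, but this is false in general under the hypotheses of the theorem, so the approach cannot succeed. Indeed, $\J(d)_k^0$ is semi-abelian if and only if $J_{K(d)}/K(d)$ has semi-abelian reduction, equivalently $C_{K(d)}/K(d)$ has semistable reduction, equivalently $L\subseteq K(d)$ where $L$ is the minimal extension of $K$ over which $C$ acquires semistable reduction. If $L/K$ is wildly ramified this never happens, since $K(d)/K$ is tame; and even if $L/K$ is tame, in which case $[L:K]=e(C/K)$ by \cite[Proposition 3.2.2.4]{HN1}, the condition becomes $e(C/K)\mid d$, which is strictly stronger than $d>e(C/K)$ (take $e(C/K)=2$, $d=3$, $p\neq2,3$). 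So the final obstacle you flag --- upgrading $\dim U_d\leq\dim U$ to $U_d=0$ --- is not a technical gap but a genuine falsity: $U_d$ vanishes only when $e(C/K)\mid d$. Note also that the theorem does not require $\J(d)_k^0$ to be semi-abelian, since split reduction is a strictly weaker property.

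The paper's proof takes a different route that never claims semi-abelian reduction over $K(d)$. Setting $a=\gcd(e(C/K),d)$, one applies Proposition~\ref{propjac} to $C(a)/K(a)$, which is legitimate because $d/a>e(C(a)/K(a))=e(C/K)/a$ by \cite[Corollary 3.2.2.10]{HN1}; this identifies the unipotent part $U$ of $\J(a)^0_k$ with the kernel of $\J(a)_k\to\J(d)_k$. Next, \cite[Proposition 1.3.3.1]{HN1} and \cite[Theorem 1]{ELL} combine to show that the canonical map $\Phi(J_{K(a)})\to\Phi(J_{K(d)})$ is an isomorphism on $p$-primary parts. Given $\phi'\in\Phi(J_{K(d)})_p$ of order $p^r$, one lets $\phi\in\Phi(J_{K(a)})$ correspond to it and lifts $\phi$ to $x\in\J(a)_k$ with $p^rx\in U$ (as in the proof of \cite[Proposition 1.4]{LL}); the image $x'$ of $x$ in $\J(d)_k$ then satisfies $p^rx'=0$ and lies above $\phi'$, giving the required lift. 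The key point is that one only needs the unipotent obstruction over $K(a)$ to die under the base change $\J(a)_k\to\J(d)_k$, not that $\J(d)^0_k$ itself has no unipotent part.
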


\begin{proof}
First, let $a=\mathrm{gcd}(e(C/K),d)$. Then $K(d)/K(a)$ is a tamely ramified extension of degree $d/a$. Let $C(a)=C\times_{ K}  K(a)$. By \cite[Corollary 3.2.2.10]{HN1}, the stabilization index of $C(a)/K(a)$ is given by \[e(C(a),K(a))=e(C/K)/a.\] Hence, we have $d/a>e(C(a),K(a))$. Let $\J/\O_K$ be N\'eron model of $J/K$. The previous proposition applied to $C(a)/K(a)$ implies that the unipotent part $U/k$ of $\J(a)_k^0/k$ is the kernel of the canonical morphism \[\J(a)_k\r\J(d)_k.\]

Now, by \cite[Proposition 1.3.3.1]{HN1}, we have $|\Phi(J_{K(d)})|=(d/a)^{t_{K(a)}}|\Phi(J_{K(a)})|$, where $t_{K(a)}$ is the dimension of the toric part of $\J(a)_k^0$. As $d/a$ is prime to $p$, the orders of $\Phi(J_{K(a)})_p$ and $\Phi(J_{K(d)})_p$ are the same. Moreover, by \cite[Theorem 1]{ELL}, the kernel of the canonical morphism \[\Phi(J_{K(a)})\r\Phi(J_{K(d)})\] is killed by $d/a$ which is prime to $p$. Hence, this morphism is an isomorphism on the $p$-primary parts of these groups.

Finally, we can proceed as in the proof of \cite[Proposition 3.3]{LL}. Let $\phi'\in \Phi(J_{K(d)})$ be an element of order $p^r$ where $r>0$. Let $\phi\in\Phi(J_{K(a)})$ be the corresponding element under the above morphism and let $x\in\J(a)_k$ be in the preimage of $\phi$. Let $x'$ be the image of $x$ in $\J(d)_k$. As in the proof of \cite[Propositin 1.4 (b)]{LL}, we may assume that $p^r\cdot x\in U$. Thus, we have $p^r\cdot x'=0$. Since $x'$ is in the preimage of $\phi'$, this proves that $J_{K(d)}/K(d)$ has split reduction.
\end{proof}

\begin{rem}
In Proposition \ref{propjac} and Theorem \ref{thm} we need assume that the abelian variety is the Jacobian variety of a curve of index one to be able to use \cite[Proposition 5.4.3]{EHN}, \cite[Corollary 5.3.1.5]{HN1} and \cite[Proposition 1.3.3.1]{HN1}. If one can prove these results without this hypothesis then Theorem \ref{thm} would be true for arbitrary semi-abelian varieties over $K$ provided that we have a suitable definition of the stabilization index $e(G/K)$ of arbitrary semi-abelian varieties $G/K$. This problem is discussed in \cite[Part 4, \S 1]{HN1}.
\end{rem}

\begin{cor}\label{yes611}
Let $J=\Jac(C)/K$ be the Jacobian variety of $C/K$. There exists a constant $c$ depending on $g$ only such that if $M/K$ is any tamely ramified extension of degree $> c$, then $J_M/M$ has split reduction. 
\end{cor}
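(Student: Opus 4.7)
The plan is to deduce the corollary directly from Theorem~\ref{thm}. Since the residue field $k$ is algebraically closed, any tamely ramified extension $M/K$ is automatically totally tamely ramified, so its degree $d=[M:K]$ is prime to $p$, and $M$ is $K$-isomorphic to the unique tame extension $K(d)$ of $K$ in $K^s$ appearing in the statement of Theorem~\ref{thm}. It therefore suffices to produce a constant $c=c(g)$, depending only on $g$, such that $e(C/K)\leq c$; then for every tamely ramified extension $M/K$ of degree $d>c$ we will automatically have $d>e(C/K)$ with $d$ prime to $p$, and Theorem~\ref{thm} will immediately yield that $J_M/M$ has split reduction.

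The entire work is thus concentrated in establishing the uniform bound $e(C/K)\leq c(g)$. Recall that $e(C/K)$ is the lcm of the multiplicities of the principal components (those of positive genus, or meeting the rest in at least three points) of the special fiber of the minimal snc model of $C$ over $\O_K$. These multiplicities are controlled by the arithmetic genus of the special fiber, which equals $g$: there are only finitely many combinatorial types of snc degenerations of a curve of genus $g$, and in each of them the multiplicities of the principal components are bounded by a function of $g$ alone. In the tame case this can also be seen via \cite[Proposition~3.2.2.4]{HN1}, which gives $e(C/K)=[L:K]$ where $L/K$ is the minimal extension over which $C$ becomes semistable, together with the classical level-$\ell$ structure argument for a fixed prime $\ell\in\{3,5\}\setminus\{p\}$ combined with Grothendieck's semistable reduction theorem to bound $[L:K]$ in terms of $g$ only.

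The main obstacle is the wildly ramified case, where the degree of the minimal semistable extension is no longer bounded in terms of $g$, so the tame strategy does not directly apply. One must instead bound the multiplicities of the principal components of the minimal snc model directly from the structure of such models for curves of genus $g$, using the classification of snc degenerations and standard invariants (arithmetic genus, intersection form). Once the bound $e(C/K)\leq c(g)$ is secured, the corollary follows at once from Theorem~\ref{thm} via the reduction to $M=K(d)$ described above, so the entire difficulty of the proof lies in this uniform control of the stabilization index.
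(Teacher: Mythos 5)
Your overall strategy matches the paper's: reduce to Theorem~\ref{thm} by producing a bound $e(C/K)\leq c(g)$, noting that any tame $M/K$ of degree $d$ is $K(d)$ and $d$ is automatically prime to $p$. That reduction is correct. The problem is that you then leave the bound on $e(C/K)$ essentially unproven in the one case where it matters.

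Your sketch has two issues. First, the claim that ``there are only finitely many combinatorial types of snc degenerations of a curve of genus $g$'' is false as stated: one can insert arbitrarily long chains of multiplicity-one rational $(-2)$-curves, so the number of combinatorial types is infinite. What \emph{is} true, and is exactly what is needed, is that the multiplicities of the \emph{principal} components are bounded in terms of $g$ alone; this is a nontrivial theorem, not a consequence of a naive finiteness count. Second, you correctly observe that the tame route (via $e(C/K)=[L:K]$ and a level-$\ell$ argument bounding the minimal semistable extension) breaks down in the wild case, you identify the wild case as ``the main obstacle,'' and then you do not resolve it --- you only restate that one should ``bound the multiplicities directly from the structure of such models,'' which is precisely the content that is missing. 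The paper closes this gap by invoking Artin--Winters \cite[Corollary~1.7]{AW}, which bounds the lcm of the multiplicities of principal components of a genus-$g$ snc degeneration in terms of $g$ only, with no tameness assumption (the $g=1$ case being handled separately by the Kodaira--N\'eron classification, where the stabilization index is at most $6$). You should cite that result; without it, your argument for the uniform bound --- the entire substance of the corollary once Theorem~\ref{thm} is in hand --- is incomplete.
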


\begin{proof}
All we need to prove is that if $g>0$ is fixed, then the stabilization index of a curve $C/K$ of genus $g$ is bounded by a constant $c$. This follows from \cite[Corollary 1.7]{AW} for $g\geq2$ and from Kodaira-N\'eron classification for elliptic curves.
\end{proof}

\begin{rem}
In the case of elliptic curves, one checks that the stabilization index is at most $6$. Hence, we almost recover Theorem \ref{21} (3) which states that $M/K$ of degree $\geq4$ is enough to acquire split reduction.
\end{rem}

Finally, let us remark that in the case of elliptic curves we have the following alternative result to Theorem \ref{thm}.

\begin{prop}
Let $E/K$ be an elliptic curve. Let $L/K$ be the minimal finite separable extension such that $E_L/L$ has semi-abelian reduction. Let $d$ be a positive integer prime to $p$. If $d>\left[L:K\right]$, then $E_{K(d)}/K(d)$ has split reduction.
\end{prop}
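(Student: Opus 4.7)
The plan is to deduce this proposition from Theorem \ref{thm} via the inequality $e(E/K) \leq [L:K]$, where $e(E/K)$ denotes the stabilization index. First I reduce to the additive case: if $E/K$ already has semi-abelian reduction then $L = K$, $[L:K] = 1$, $E_{K(d)}/K(d)$ remains semi-abelian, and split reduction holds by \S\ref{S11}.

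Assume then that $E/K$ has additive reduction. I would next establish $e(E/K) \leq [L:K]$. By the Kodaira--N\'eron classification $e(E/K) \in \{1,2,3,4,6\}$, so the inequality is immediate once $[L:K] \geq 6$. When $L/K$ is tame, \cite[Proposition 3.2.2.4]{HN1} gives the equality $e(E/K)=[L:K]$. The remaining cases are $L/K$ wild with $[L:K] \in \{2,3,4\}$, which forces $p \in \{2,3\}$; here a direct inspection of which Kodaira--N\'eron types admit a semistabilizing extension of that small degree yields the bound---for instance $[L:K]=2$ forces type $\mathbf{I}_n^\ast$ (with $e=2$), $[L:K]=3$ forces $\mathbf{IV}$ or $\mathbf{IV}^\ast$ (with $e=3$), and $[L:K]=4$ forces $\mathbf{III}$ or $\mathbf{III}^\ast$ (with $e=4$). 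Note that $[L:K]=5$ cannot occur, since no Kodaira--N\'eron type has stabilization index $5$.

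Granting $e(E/K) \leq [L:K]$, the hypothesis $d > [L:K]$ gives $d > e(E/K)$, and Theorem \ref{thm} applied to $\Jac(E) = E$ directly yields that $E_{K(d)}/K(d)$ has split reduction.

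The main obstacle is the finite verification $e(E/K) \leq [L:K]$ in the wild regime of the second paragraph: although morally clear (the wild part of $L/K$ should only inflate the degree above the tame value $e(E/K)$), it requires tracking how each reduction type interacts with wild ramification when $p \in \{2,3\}$, a routine but slightly delicate bookkeeping.
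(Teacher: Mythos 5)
The proposed reduction to Theorem \ref{thm} via the inequality $e(E/K)\leq[L:K]$ does not work, and the gap is precisely the ``routine but slightly delicate bookkeeping'' you flag at the end: that bookkeeping is not routine, and in fact the inequality you need is false. In the wildly ramified regime the reduction type is \emph{not} determined by $[L:K]$ in the way your case analysis asserts. Concretely, for $p=2$ a quadratic extension $L/K$ over which $E$ acquires good reduction is necessarily wildly ramified, and the corresponding wild quadratic twist of a good-reduction curve need not be of type $\mathbf{I}_n^{\ast}$; it can be of type $\mathbf{II}$, $\mathbf{III}$, $\mathbf{III}^{\ast}$ or $\mathbf{II}^{\ast}$. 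These types have stabilization index $6$, $4$, $4$, $6$ respectively, all strictly greater than $[L:K]=2$, so $e(E/K)\leq[L:K]$ fails. Similar failures occur for $p=3$ and $[L:K]=3$. Consequently, in the critical case $d=3$, $[L:K]=2$, one can have $e(E/K)\in\{4,6\}>d$, and Theorem \ref{thm} simply does not apply.

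The paper's argument circumvents the inequality entirely. It first disposes of $d\geq4$ using Theorem \ref{21} (3), leaving only $d=3$, $[L:K]\leq 2$. It then excludes $\mathbf{IV}$, $\mathbf{IV}^{\ast}$ via \cite[Theorem 1]{ELL}, handles $\mathbf{I}_n^{\ast}$ by Theorem \ref{thm} (since $e=2<3$ there), and for the remaining types $\mathbf{II}$, $\mathbf{III}$, $\mathbf{III}^{\ast}$, $\mathbf{II}^{\ast}$ --- where $e(E/K)>d$ and your route is blocked --- it combines the multiplicativity $\delta(E_{K(3)}/K(3))=3\,\delta(E/K)$, the bound of Theorem \ref{21} (1), and Ogg's formula to show that ``not split over $K(3)$'' forces $v_K(\Delta)\in\{3,4,10,11\}$, which is incompatible with $[L:K]=2$ (the latter forces $v_K(\Delta)\equiv 0\ \mathrm{mod}\ 6$). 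You would need something like this Swan-conductor argument to close the gap; appealing to $e(E/K)\leq[L:K]$ cannot work.
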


\begin{proof}
By Theorem \ref{21} (c), if $d\geq 4$ then $E_{K(d)}/K(d)$ has split reduction. Hence, the only case to tackle is then $\left[L:K\right]=2$ and $d=3$. In this case, the reduction type cannot be $\mathbf{IV}$ or $\mathbf{IV}^{\ast}$ because for these types the groups of components are of order $3$ and should be killed by $\left[L:K\right]$ by \cite[Theorem 1]{ELL}. If $E/K$ has reduction type $\mathbf{I}_n^{\ast}$ then $e(E/K)=2$ and the result follows from Theorem \ref{thm}. Thus, we may assume that the reduction type of $E/K$ is either $\mathbf{II}$, $\mathbf{III}$, $\mathbf{III}^{\ast}$ or $\mathbf{II}^{\ast}$. Recall that we have the following formula for Swan conductors \[\delta(E_{K(d)}/K(d))=d\cdot\delta(E/K).\] By \cite[Lemma 3.4]{LL}, the reduction type of $E_{K(d)}/K(d)$ will be either $\mathbf{II}$, $\mathbf{III}$, $\mathbf{III}^{\ast}$, $\mathbf{II}^{\ast}$ or $\mathbf{I}_0^{\ast}$. By Theorem \ref{21} (a), in order that $E_{K(d)}/K(d)$ does not have split reduction we must have $\delta(E/K)=1$. Let $\Delta$ be the minimal discriminant of $E/K$. Then, Ogg's formula implies that $v_K(\Delta)\in\left\{3,4,10,11\right\}$. But this is not possible if $\left[L:K\right]=2$. Hence, $E_{K(d)}/K(d)$ has split reduction.
\end{proof}

\section{Splitting properties and the Swan conductor}\label{V}

\subsection{Swan conductor of Weil restrictions} Let $L/K$ be a finite separable extension. Let $E/L$ be an elliptic curve and let $A=\Res_{L/K}E/K$ be its Weil restriction under the extension $L/K$. By \cite[\S 1, Lemma]{Mi}, the Swan conductor of $A/K$ is given by the following formula \begin{equation}\delta(A/K)=\delta(E/L)+2(v_L(\D_{L/K})-(p-1)),\label{swan}\end{equation} where $\D_{L/K}$ is the different ideal of the extension $L/K$. Before tackling Question \ref{69} let us start with some preliminary results. 
 
\subsection{Some preliminaries}\label{S52} Let $L$ be a complete discrete valuation field with algebraically closed residue field. Let $E/L$ be an elliptic curve which acquires multiplicative reduction over a quadratic separable extension $M/L$. Then, there exists $q\in L^{\times}$ such that $E/L$ is isomorphic over $M$ to the Tate curve defined by $q$ (see \cite[Theorem V.5.3]{Si2}). Hence, the non-Archimedean uniformization of $E/L$ is given by the following exact sequence \[0\r\Lambda\r\Res^1_{M/L}\bbG_{m,M}\r E\r0,\] where $\Res^1_{M/L}\bbG_{m,M}$ is the norm torus defined in \S\ref{S110} and $\Lambda/L$ is the lattice \[\Res_{M/L}q^{\bbZ}\cap\Res^1_{M/L}\bbG_{m,M},\] where $q^{\bbZ}/M$ is the constant lattice in $\bbG_{m,M}/M$.

\begin{lem}\label{unif}
Keep the previous notation. Assume that $v_L(q)$ is odd. Then, the following assertions are equivalent :
\begin{enumerate}
\item $E/L$ has totally not split reduction;
\item $E/L$ does not have split reduction;
\item $\Res^1_{M/L}\bbG_{m,M}/L$ has totally not split reduction;
\item $\Res^1_{M/L}\bbG_{m,M}/L$ does not have split reduction.
\end{enumerate}
\end{lem}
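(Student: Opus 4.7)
The four conditions split into two ``internal'' equivalences, one for $E$ and one for $T$, to be combined with a bridge through the non-Archimedean uniformization $0\to\Lambda\to T\to E\to 0$ recalled just above the lemma. The implication $(4)\Rightarrow(2)$ is immediate from \cite[Proposition 6.1 (a)]{LL}: if $E/L$ had split reduction, then so would $T/L$. It therefore suffices to prove the two internal equivalences $(1)\Leftrightarrow(2)$ and $(3)\Leftrightarrow(4)$ together with the bridging implication $(1)\Rightarrow(3)$.

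For $T=\Res^1_{M/L}\bbG_{m,M}$, the standard theory of N\'eron models of tori (or a specialization of Theorem \ref{46}~(1) to this one-dimensional case) shows that $\Phi(T)$ is either trivial (when $M/L$ is unramified) or cyclic of order $2$ (when $M/L$ is ramified, forcing $p=2$). Since $\Phi(T)_p$ is cyclic of order at most $p=2$, the notions of ``not split'' and ``totally not split'' tautologically coincide for the corresponding extension by $\mathcal{T}^0_{k,p}$, giving $(3)\Leftrightarrow(4)$. For $E$, the hypothesis that $v_L(q)$ is odd ensures via Tate's description of $E_M$ that $E/L$ is of Kodaira type $\mathbf{I}_n^*$ with $n$ odd, so $\Phi(E)\simeq\bbZ/4\bbZ$ and $\E^0_k\simeq\bbG_a$. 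Let $\phi\in\Phi(E)$ be a generator and $x\in\E_k(k)$ a lift; since $4\phi=0$, $4x\in\E^0_k(k)=k$, and replacing $x$ by $x+w$ for $w\in k$ modifies $4x$ by $4w=0$ in characteristic two, so $4x\in k$ depends only on $\phi$. As $y=2x$ is a lift of the unique order-$2$ element $2\phi$ and $2y=4x$, $\phi$ admits an order-$4$ lift if and only if $4x=0$ if and only if $2\phi$ admits an order-$2$ lift, i.e.\ $E/L$ is split if and only if it is not totally not split, yielding $(1)\Leftrightarrow(2)$.

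For the bridging implication $(1)\Rightarrow(3)$, I would exploit the morphism of N\'eron models $\mathcal{T}\to\E$ induced by the uniformization and show that the resulting map $\Phi(T)_2\to\Phi(E)_2$ carries the generator of $\Phi(T)\simeq\bbZ/2\bbZ$ to the unique nontrivial element $2\phi\in\Phi(E)_2$. The nonvanishing of this image is a parity computation depending on $v_L(q)\bmod 2$, and is the crux of the proof. Granting it, an order-$2$ lift in $\mathcal{T}_k(k)$ of the generator of $\Phi(T)$ would map to a lift of $2\phi$ in $\E_k(k)$ of order dividing $2$; since its image in $\Phi(E)$ is nonzero, this lift must have order exactly $2$, contradicting the assumption that $E/L$ is totally not split. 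Hence $T/L$ is not split, and by $(3)\Leftrightarrow(4)$ it is totally not split, giving $(1)\Rightarrow(3)$.

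The main obstacle is this last parity identification, namely the assertion that $\Phi(T)_2\to\Phi(E)_2$ is non-zero precisely under the hypothesis $v_L(q)$ odd. If $v_L(q)$ were even one would instead have $\Phi(E)\simeq(\bbZ/2\bbZ)^2$ and the map would be expected to vanish, breaking the equivalence; this parity is exactly the point at which the odd-valuation hypothesis enters essentially.
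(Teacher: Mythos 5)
Your decomposition of the problem is the right one and matches the paper's: two internal equivalences $(1)\Leftrightarrow(2)$ and $(3)\Leftrightarrow(4)$, plus two bridging implications across the uniformization. Your $(3)\Leftrightarrow(4)$ and $(4)\Rightarrow(2)$ are exactly what the paper does (the latter via \cite[Proposition 6.1(a)]{LL}), and your hands-on argument for $(1)\Leftrightarrow(2)$, using that $\Phi(E)$ is cyclic of $2$-power order and that $\E^0_k\cong\bbG_a$ is killed by $p=2$, is a perfectly valid elementary substitute for the paper's citation of \cite[Proposition 1.4(e)]{LL} together with \cite[Theorem 2.8]{Lor}.

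The genuine gap is the bridging implication $(1)\Rightarrow(3)$, which you flag yourself as ``the crux'' and leave unproved. Two issues here. First, there is no morphism of (algebraic) N\'eron models $\mathcal{T}\to\E$ in the naive sense: the uniformization map $T\to E$ is a map of rigid analytic groups, not of algebraic groups over $L$, so producing the induced map on special fibers and component groups and establishing its properties is precisely the nontrivial content of Bosch--Xarles's theory. The paper sidesteps this entirely by invoking \cite[Proposition 6.1(b)]{LL}, which is the ``totally not split'' companion to part (a) and supplies $(1)\Rightarrow(3)$ directly. Second, your diagnosis that the odd-valuation hypothesis ``enters essentially'' as a parity condition making $\Phi(T)_2\to\Phi(E)_2$ nonvanishing is, I believe, a misreading of where the hypothesis is used. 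The kernel of the induced map $\Phi(T)\to\Phi(E)$ is controlled by the image of the free group $\Phi(\Lambda)$, so $\Phi(T)_{\tors}\to\Phi(E)$ is injective without any parity hypothesis; the bridging implications $(4)\Rightarrow(2)$ and $(1)\Rightarrow(3)$ hold for arbitrary uniformizations. The place where $v_L(q)$ odd is indispensable is exactly the step you already used it for, namely forcing $\Phi(E)$ to be cyclic so that $(1)\Leftrightarrow(2)$ holds; if $v_L(q)$ were even, $\Phi(E)\cong(\bbZ/2\bbZ)^2$ and the chain breaks at $(2)\Rightarrow(1)$, not at the bridge. In short: the structure and three of the four links are sound, but the fourth link needs either the citation to \cite[Proposition 6.1(b)]{LL} or a careful Bosch--Xarles argument, and your proposed parity computation is aiming at the wrong target.
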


\begin{proof}
As $v_L(q)$ is odd, the group of components $\Phi(E)$ is cyclic (see \cite[Theorem 2.8]{Lor}). Thus, (1) $\Leftrightarrow$ (2) follows from \cite[Proposition 1.4(e)]{LL}. Then, (3) $\Leftrightarrow$ (4) is trivial since $\Phi(\Res^1_{M/L}\bbG_{m,M})\cong\bbZ/2\bbZ$ by \cite[Proposition 4.17 (2)]{LL}. Now, the lemma follows from by \cite[Proposition 6.1 (a) and (b)]{LL}.
\end{proof}

\begin{prop}\label{isog}
Let $L/K$ be a quadratic separable extension and let $\sigma$ be the non-trivial element of $\Gal(L/K)$. Let $E/L$ be an elliptic curve which acquires multiplicative reduction over a quadratic separable extension $M/L$. Let $q\in L^{\times}$ be such that $E_M/M$ is isomorphic to the Tate curve defined by $q$. If $q/\sigma(q)$ is not a root of $1$ then the abelian variety $A=\Res_{L/K}E/K$ is simple.
\end{prop}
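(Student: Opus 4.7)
The plan is to argue by Galois descent through the quadratic extension $L/K$. By the base change formula for Weil restrictions, there is an isomorphism $A_L=A\times_K L\cong E\times_L{}^{\sigma}E$ of $L$-group schemes, and the action of $\Gal(L/K)=\langle\sigma\rangle$ on the right-hand side permutes the two factors (combined with the appropriate Galois twist). Under this identification, a proper non-trivial abelian subvariety of $A$ defined over $K$ corresponds to a proper non-trivial Galois-stable abelian subvariety of $E\times{}^{\sigma}E$ over $L$, and the simplicity of $A/K$ will follow once one shows that no such subvariety exists.

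Assuming for the moment that $E$ and ${}^{\sigma}E$ are not isogenous over $K^s$, and using that each is a simple abelian variety, the only abelian subvarieties of $E\times{}^{\sigma}E$ over $K^s$ (a fortiori over $L$) are $0$, $E\times 0$, $0\times{}^{\sigma}E$, and the whole product. Since $\sigma$ swaps the middle two, only $0$ and $A_L$ are Galois-stable, which yields the simplicity of $A/K$. The problem therefore reduces to establishing the non-isogeny of $E$ and ${}^{\sigma}E$ over $K^s$.

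For this I would exploit the Tate uniformization. Over $K^s$ one has $E_{K^s}\cong\bbG_{m,K^s}/q^{\bbZ}$ and, because the Tate parameter is recovered canonically from the $j$-invariant by inverting the series $j=1/q+744+\cdots$ on the open unit disc, applying $\sigma$ gives $({}^{\sigma}E)_{K^s}\cong\bbG_{m,K^s}/\sigma(q)^{\bbZ}$. Any isogeny between these Tate curves lifts to a group endomorphism of the universal cover $\bbG_m$ and therefore has the form $z\mapsto z^n$ for some $n\in\bbZ\setminus\{0\}$; this descends to the quotient precisely when $q^n\in\sigma(q)^{\bbZ}$, i.e.\ $q^n=\sigma(q)^m$ for some $m\in\bbZ\setminus\{0\}$. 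Since $\sigma$ preserves $v_L$ and $v_L(q)>0$ (because $E_M$ has multiplicative, not good, reduction), comparing valuations forces $n=m$, whence $(q/\sigma(q))^n=1$, contradicting the hypothesis that $q/\sigma(q)$ is not a root of unity.

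The main delicate point is the classification of isogenies between Tate curves: one must justify that every isogeny $\bbG_m/q^{\bbZ}\to\bbG_m/q'^{\bbZ}$ is induced by a group endomorphism of $\bbG_m$, necessarily of the form $z\mapsto z^n$. Granting this, the valuation comparison is immediate and the Galois descent argument is purely formal.
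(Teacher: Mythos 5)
Your proof is correct and follows essentially the same route as the paper: both reduce simplicity of $A/K$ to showing that $E$ and ${}^\sigma E$ are not isogenous, and both deduce this from the non-commensurability of $q$ and $\sigma(q)$ via the classification of isogenies between Tate curves (the paper cites Tate's theorem for the step you flag as the ``main delicate point''; your sketch of it is the standard argument). The only presentational differences are that you phrase the first reduction as Galois descent on the decomposition $A_L\cong E\times{}^{\sigma}E$ rather than via the adjunction $\Hom_K(B,\Res_{L/K}E)\cong\Hom_L(B_L,E)$ as the paper does, and you run the Tate-curve comparison over $K^s$ rather than over $M$ --- the latter is in fact a small improvement, since ${}^{\sigma}E$ need not become a Tate curve over $M$ itself (only over ${}^{\sigma}M$), whereas over $K^s$ both are unambiguously Tate curves.
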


\begin{proof}
We have $\dim(A)=2$ (see \S\ref{S22}). Let $B/K$ be an abelian variety of dimension $1$ and assume that there is a non-trivial $K$-morphism \[B\r A.\] Then, by property of the Weil restriction we get an $L$-isogeny \[B\times_K L\r E.\] From this, we get another $L$-isogeny \[B\times_K L\r\ ^{\sigma}E.\] Hence, $E$ and $^{\sigma}E$ should be $L$-isogenous. Now, the elliptic curve $ ^{\sigma}E/L$ is isomorphic over $M$ to the Tate curve defined by $\sigma(q)\in L^{\times}$. If $q/\sigma(q)$ is not a root of $1$, then $q$ and $\sigma(q)$ are not commensurable, i.e. there are no non-zero integers $m,n$ such that $q^n=\sigma(q)^m$. By \cite[\S Isogenies, Theorem]{Ta}, this implies that $E$ and $ ^{\sigma}E$ are not isogenous over $M$ hence neither over $L$. Whence, $A/K$ is simple.
\end{proof}

\subsection{Counterexample to Question \ref{69}}\label{no69}
Assume that $K$ is of residue characteristic $2$. Let $L/K$ be a quadratic separable extension. It follows from \cite[Lemmata 4.1(b) and 4.5]{LL} and Theorem \ref{46} (1) that one can choose a quadratic separable extension $M/L$ such that $\Res^1_{M/L}\bbG_{m,M}$ does not have split reduction. Let $\sigma$ be the non-trivial element of $\Gal(L/K)$. Let us choose $q\in L^{\times}$ with $v_L(q)$ odd and such that $q/\sigma(q)$ is not a root of $1$. Consider the elliptic curve $E/L$ given by the uniformization in \S\ref{S52}. It follows from Lemma \ref{unif} that $E/L$ has totally not split reduction. Then, by Proposition \ref{weil}, $A=\Res_{L/K}E/K$ has totally not split reduction. Finally, it follows from Proposition \ref{isog} that $A/K$ is simple. We already mentionned in \S\ref{ordred} that \[\A^0_k=\Res_{(\O_L/\pi_K\O_L)/k}\bbG_{a,(\O_L/\pi_K\O_L)}\] is an unipotent algebraic group over $k$ and thus the toric rank of $A/K$ is $0$. Now we have \[1\leq\delta(E/L)\leq3\] by Theorem \ref{21} (1) and thus it is clear from Formula (\ref{swan}) that $\delta(A/K)$ really depends on the field $K$.
Whence, the answer to Question \ref{69} is negative even for simple abelian varieties over $K$.

To conclude, let us give a concrete example. Let $\bbQ_2^{ur}$ be the maximal unramified extension of the field of $2$-adic numbers. Let $d\geq2$ be an integer. Let $K=\bbQ_2^{ur}(\pi_K)$ with $\pi_K^d=2$ and let $L=K(\pi_L)$ with $\pi_L^2=\pi_K$. The different ideal of $L/K$ is $2\pi_L\O_L$ (use \cite[Corollaire III.6.2]{Se1}). Let $q=\pi_L^n(1+\pi_L)$ with $n$ odd. Let $\sigma$ be the non-trivial element of $\Gal(L/K)$. We have \[q/\sigma(q)=-(1+\pi_L)/(1-\pi_L),\] and this is not a root of $1$ because it is not in $\bbQ_2^{ur}$ (apply $\sigma$). Now, let $M/L$ be the extension defined by the Eisenstein polynomial $t^2+\pi_L t+\pi_L$. The different ideal of the extension $M/L$ is $\D_{M/L}=\pi_L\O_M$ (use \cite[Corollaire III.6.2]{Se1}). Then, by \cite[Lemmata A.1 (b) and 4.5]{LL} we have \[\begin{array}{lll}\delta(\Res^1_{M/L}\bbG_{m,M}/L)&=&v_M(\D_{M/L})-(2-1)\\&=&1.\end{array}\] It follows from Theorem \ref{46} (1) that $\Res^1_{M/L}\bbG_{m,M}$ has totally not split reduction. Now, by \cite[Proposition 6.4 (b)]{LL}, we have \[\begin{array}{lll}\delta(E/L)&=&2\delta(\Res^1_{M/L}\bbG_{m,M}/L)\\&=&2.\end{array}\] Finally, by formula (\ref{swan}), we have \[\begin{array}{lll}\delta(A/K)&=&\delta(E/L)+2(v_L(2\pi_L)-(2-1))\\&=&2+4d.\end{array}\]

\begin{rem}
As suggested in \cite{LL}, one may ask the same question for the more restricted class of abelian varieties such that the representation of the absolute Galois group $\Gal(K^s/K)$ on the Tate module $T_{\ell}(A)$, $\ell\neq p$, is irreducible. We do not know any counterexample of this kind.
\end{rem}

\subsection{Brumer and Kramer's bound} Before tackling Question \ref{610}, let us recall the bound for the Swan conductor given in \cite{BK}. For any abelian variety $A/K$ and any extension $M/K$, let us denote by $a_M$ and $t_M$ the dimensions of the abelian and toric parts of the reduction of $A_M/M$ respectively. Let us consider the following function on integers \[\lambda_p(n)=\sum_{i=0}^sir_ip^i,\] where \[n=\sum_{i=0}^sr_ip^i\] is the $p$-adic expansion of $n$, with $0\leq r_i<p$.

\begin{prop}\cite[Proposition 3.11]{BK}\label{BK}

Assume that $K$ is of characteristic $0$. Let $A/K$ be an abelian variety which acquires semi-abelian reduction over $L$. Let $K_1$ be the subfield of $L$ fixed by the first ramification subgroup of $\Gal(L/K)$. We have \[\delta(A/K)\leq2(d_t+d_a)pv_K(p)+(p-1)(2\lambda_p(d_t)+\lambda_p(2d_a))v_K(p),\] where $d_t$ and $d_a$ are defined by $t_L-t_{K_1}=(p-1)d_t$ and $a_L-a_{K_1}=(p-1)d_a$ respectively. 
\end{prop}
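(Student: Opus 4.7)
The proposition is stated as a citation to \cite[Proposition 3.11]{BK}, so my plan is to outline the strategy behind the bound rather than reprove it in full. By definition $\delta(A/K)$ is the Swan conductor of the representation of the inertia group $I_K\subseteq\Gal(K^s/K)$ on $V_\ell A=T_\ell A\otimes_{\bbZ_\ell}\bbQ_\ell$ for any prime $\ell\neq p$; since $A$ acquires semi-abelian reduction over $L$, the wild inertia of $L$ acts trivially on $V_\ell A$, so the entire Swan conductor is captured by the action of the finite quotient $I_K^{\mathrm{wild}}/I_L^{\mathrm{wild}}$, and it is this finite group representation that must be analysed.

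First, I would reduce to a sum of two contributions via the weight filtration on $V_\ell A$ coming from the semi-abelian reduction over $L$: the three graded pieces have respective dimensions $t_L$, $2a_L$ and $t_L$, they are stable under inertia, and they contribute additively to the Swan conductor, so the toric and the abelian parts can be bounded separately. Second, because $K_1$ is fixed by the first ramification subgroup, all wild ramification is concentrated in $L/K_1$ while $K_1/K$ is tame and contributes nothing; the wild piece of the representation on each graded component then has rank $(p-1)d_t$ or $(p-1)d_a$ respectively, since non-trivial wild characters of $\Gal(L/K_1)$ come in orbits of size $p-1$ under the action of the tame cyclic quotient $\Gal(K_1/K)$. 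This is precisely the meaning of the definitions $t_L-t_{K_1}=(p-1)d_t$ and $a_L-a_{K_1}=(p-1)d_a$.

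The linear term $2(d_t+d_a)pv_K(p)$ then encodes the worst-case depth, namely the classical bound $pv_K(p)$ on any ramification break of a finite $p$-extension of a mixed-characteristic complete discrete valuation field with absolute ramification index $v_K(p)$, times the total multiplicity $2(d_t+d_a)$ arising across the toric and abelian graded pieces. The main obstacle will be extracting the finer correction $(p-1)(2\lambda_p(d_t)+\lambda_p(2d_a))v_K(p)$: this requires a combinatorial identity to the effect that for an elementary abelian $p$-group of rank $n$ whose action has a prescribed ramification break, the total Swan conductor of its non-trivial characters equals $\lambda_p(n)$ times that break. This identity is a consequence of Hasse--Arf together with the $p$-adic expansion $n=\sum r_ip^i$, and combining it correctly with the depth estimate and the factor $v_K(p)$ on each graded piece, while respecting the asymmetry between the toric rank $d_t$ and the ``doubled'' abelian rank $2d_a$ (which reflects that $V_\ell B$ has dimension $2a_L$), is the technical heart of the argument in \cite{BK}.
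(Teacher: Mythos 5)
The paper does not prove this proposition: it is quoted verbatim as \cite[Proposition 3.11]{BK}, so there is no internal argument to compare your sketch against. That said, your outline contains two claims that do not hold up and would not survive contact with the actual Brumer--Kramer proof.

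First, your explanation of the $(p-1)$ factor is wrong. You attribute the divisibility $t_L - t_{K_1} \equiv 0 \pmod{p-1}$ to ``orbits of size $p-1$ under the action of the tame cyclic quotient $\Gal(K_1/K)$.'' But $\Gal(K_1/K)$ can be trivial -- for instance when $L/K$ is already totally wildly ramified -- and the $(p-1)$ divisibility still has to hold. The real source is representation-theoretic: the part of the character lattice (resp.\ of $V_\ell A$) that is new over $L$ carries an action of the $p$-group $\Gal(L/K_1)$ with no non-zero fixed vectors, and any such $\bbZ[\Gal(L/K_1)]$-lattice (resp.\ $\bbQ_\ell$-representation, $\ell\neq p$) has rank divisible by $p-1$, essentially because nontrivial irreducible constituents come from cyclotomic fields $\bbQ(\zeta_{p^j})$ whose degree is a multiple of $p-1$. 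This is a statement about the structure of $p$-group representations, not about orbits of a prime-to-$p$ quotient acting on characters.

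Second, the sentence about $\lambda_p$ is not an argument. You assert that ``for an elementary abelian $p$-group of rank $n$ whose action has a prescribed ramification break, the total Swan conductor of its non-trivial characters equals $\lambda_p(n)$ times that break,'' but $\lambda_p(n)=\sum i r_i p^i$ is a function of a $p$-adic expansion $n=\sum r_i p^i$, not of a group rank, and an elementary abelian group of rank $s$ has $p^s-1$ non-trivial characters, which is not $\lambda_p$ of anything natural here. You have the right ingredients on the table (the bound $pv_K(p)$ on ramification breaks in mixed characteristic, Hasse--Arf, the splitting into graded pieces of dimensions $t_L$, $2a_L$, $t_L$), but the combinatorial step that actually produces the $(p-1)(2\lambda_p(d_t)+\lambda_p(2d_a))v_K(p)$ correction is left as an unverified assertion, and as stated it is not even dimensionally consistent. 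Since the whole point of that term is to sharpen the naive depth estimate $2(d_t+d_a)pv_K(p)$, this gap is precisely where the content of \cite{BK} lives and cannot be waved away.
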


\subsection{Counterexample to Question \ref{610}}\label{no610} Assume that $K$ is of characteristic $0$ and that it contains the $p$-th roots of $1$. Let $L/K$ be the Kummer extension given by the polynomial $t^p-\pi_K$. Let $E/L$ be an elliptic curve and let $A=\Res_{L/K}E/K$ be its Weil restriction under the extension $L/K$. Using \cite[Corollaire III.6.2]{Se1} we can compute $\D_{L/K}=p\pi_L^{p-1}\O_L$ and then formula (\ref{swan}) gives \[\begin{array}{lll}\delta(A/K)&=&\delta(E/L)+2v_L(p)\\&=&\delta(E/L)+2pv_K(p).\end{array}\]

Assume that $E/L$ is a Tate curve. Then $E/L$ has semi-abelian reduction and thus $\delta(E/L)=0$ so that $\delta(A/K)=2pv_K(p)$. Now let us compute the bound for the Swan conductor from Proposition \ref{BK}. Here, the Galois group $\Gal(L/K)$ coincides with the first ramification subgroup, hence the subfield of $L$ fixed by the latter is simply $K$. By Proposition \ref{mult}, $A_L/L$ has purely multiplicative reduction. Thus, we have $a_K=a_L=0$ hence $d_a=0$ and $t_K=1$ (see the exact sequence (\ref{untor})) whereas $t_L=p$ hence $d_t=1$. We also have $\lambda_p(0)=\lambda_p(1)=0$ and therefore we get \[\delta(A/K)\leq2pv_K(p).\] This bound is exactly the one we achieved. Now we saw in Proposition \ref{resE} that we may choose our elliptic curve $E/L$ such that $A/K$ does not have split reduction and therefore the answer to Question \ref{610} is no in general. 

\begin{rem}
Our example has positive toric rank ($t_K=1$). We do not know any example of abelian variety with toric rank $0$ which does not have split reduction and whose conductor achieves the bound from \cite{BK}.
\end{rem}

\begin{rem} Assume that $K$ is of characteristic $p>0$. Let $L/K$ be the extension given by the Eisenstein polynomial $t^p+a_{p-1}t+a_p$ where $a_{p-1}\in \pi_K\O_K$ is different from $0$ and $v_K(a_p)=1$. This polynomial is separable and the different ideal of the extension $L/K$ is $\D_{L/K}=a_{p-1}\O_L$ (use \cite[Corollaire III.6.2]{Se1}). Now, let $E/L$ be an elliptic curve with totally not split reduction. Let $A=\Res_{L/K}E$ be its Weil restriction under the extension $L/K$. Then, $A/K$ has totally not split reduction by Proposition \ref{weil}. Moreover, by formula (\ref{swan}) we have \[\delta(A/K)=\delta(E/L)+2pv_K(a_{p-1})-(p-1).\] Hence, considering an unbounded family of $a_p$'s we get a family of abelian varieties which have totally not split reduction but unbounded Swan conductors.
\end{rem}

\end{document}